\documentclass[11pt,reqno]{amsart}
\usepackage{amsmath,amsxtra,latexsym,amsthm,amssymb,amscd,pb-diagram}
\usepackage{mathrsfs,mathabx,amsfonts}
\usepackage[colorlinks=true,linkcolor=red,citecolor=blue]{hyperref}
\usepackage[margin=1in]{geometry}
\usepackage{bm}
\usepackage{color}
\usepackage{makecell,multirow,diagbox}
\usepackage{mathtools}
\usepackage[displaymath,mathlines]{lineno}
\usepackage{verbatim} 
\usepackage{graphicx}
\usepackage{epsfig}
\usepackage{array}
\usepackage{enumitem}

\newtheorem{theorem}{Theorem}
\newtheorem{corollary}[theorem]{Corollary}
\newtheorem{lemma}[theorem]{Lemma}

\theoremstyle{definition}
\newtheorem{definition}[theorem]{Definition}
\newtheorem{hypothesis}[theorem]{Hypothesis}
\theoremstyle{remark}
\newtheorem{remark}[theorem]{Remark}
\newtheorem{example}{Example}

\begin{document} 
	\title[Nonexistence for damped waves]{Nonexistence for effectively damped waves with time-dependent mass}
	
	\subjclass{26A15, 35A01, 35B33, 35L52}
	\keywords{Semilinear wave equations; Effective damping; Time-dependent mass; Nonexistence; Fujita exponent; Test-function method}
	
	\maketitle
	\centerline{\scshape \textbf{Duc An Phan}}
	{\footnotesize
		\centerline{Department of Mathematics, Banking Academy of Vietnam}
		\centerline{12 Chua Boc, Kim Lien, Hanoi, Vietnam}
		\centerline{Email: anpd@hvnh.edu.vn}}
	\medskip
	
	\centerline{\scshape \textbf{The Anh Cung}}
	{\footnotesize
		\centerline{School of Mathematics and Computer Science, Hanoi National University of Education}
		\centerline{136 Xuan Thuy, Cau Giay, Hanoi, Vietnam}
		\centerline{Email: anhctmath@hnue.edu.vn}}
	\medskip
	
	\centerline{\scshape \textbf{Trung Loc Tang}}
	{\footnotesize
		\centerline{Department of Mathematics, Thang Long University}
		\centerline{Nghiem Xuan Yem, Hoang Mai, Hanoi, Vietnam}
		\centerline{Email: loctt@thanglong.edu.vn}}
	
	\begin{abstract}
		In this paper, we study the semilinear wave equations
		$$
		u_{tt}-\Delta u+b(t)u_t+m^2(t)u=|u|^p, \quad t \geq 0, \quad x\in\mathbb{R}^n
		$$
		with effective time-dependent damping and a time-dependent mass dominated by the damping. D'Abbicco, Girardi and Reissig established global small-data existence in supercritical ranges and identified the scale
		$$
		p_{\beta,\eta}(n)=1+\frac{2\eta}{n+2\eta\beta}
		$$
		for initial data in $(L^\eta(\mathbb R^n)\cap H^1(\mathbb R^n))\times(L^\eta(\mathbb R^n)\cap L^2(\mathbb R^n))$ with $1\leq\eta<2$, where $\beta$ is the lower mass index associated with the damping-mass pair. To support the expected sharpness of this scale, they also established an analogous subcritical nonexistence result for the corresponding diffusion equation with nonnegative initial data in $L^\eta(\mathbb{R}^n)$, leaving the wave-equation counterpart with effective damping and time-dependent mass open. We address this problem for $\eta=1$ under an intrinsic accumulated-mass balance and a Liouville nonoscillation condition. By constructing a positive slow adjoint mode, we prove nonexistence of global weak solutions for
		$$
		1<p<p_{\beta,1}(n)=1+\frac{2}{n+2\beta},
		$$
		and also treat the critical case $p=p_{\beta,1}(n)$ under an Osgood divergence condition. Conditional lifespan upper bounds and explicit admissible coefficient families are also given.
	\end{abstract}
	
	\tableofcontents
	
	\section{Introduction}
	
	In the present paper, we investigate the Cauchy problem for semilinear waves equations with time-dependent damping and mass terms
	\begin{equation}
		\label{eq:main}
		\begin{cases}
			u_{tt}-\Delta u+b(t)u_t+m^2(t)u=|u|^p,
			&t \geq 0, \quad x\in\mathbb{R}^n,\\
			u(0,x)=\varepsilon f(x), \quad u_t(0,x)=\varepsilon g(x),
		\end{cases}
	\end{equation}
	where $n\geq1$, $p>1$ and $\varepsilon>0$. The damping is assumed to be effective, and the positive mass is dominated by the damping. D'Abbicco, Girardi and Reissig studied precisely the semilinear wave problem \eqref{eq:main} with time-dependent damping and mass. They first derived decay estimates for the associated linear wave equations with time-dependent damping and mass, and then used them to prove global small-data energy solutions in supercritical ranges. They introduced
	\begin{equation}
		\label{eq:beta}
		\beta:=\liminf_{t\to\infty}B(t)m^2(t),
		\quad
		B(t):=\int_0^t\frac{\mathrm{d}\tau}{b(\tau)},
	\end{equation}
	and, for data in the energy space, obtained global existence above
	$$
	\overline p_{\beta}(n):=1+\frac{4}{n+4\beta}
	\quad(\beta<\infty),
	$$
	subject to the dimensional and Sobolev restrictions stated in \cite[Theorem 2]{DGR2019}. With the additional integrability
	$$
	(f,g)\in\mathcal A_\eta
	:=(L^\eta(\mathbb R^n)\cap H^1(\mathbb R^n))\times(L^\eta(\mathbb R^n)\cap L^2(\mathbb R^n)),
	\quad 1\leq\eta<2,
	$$
	they obtained the scale
	$$
	p_{\beta,\eta}(n)=1+\frac{2\eta}{n+2\eta\beta},
	$$
	and in particular
	\begin{equation}
		\label{eq:pc}
		p_{\beta,1}(n):=1+\frac{2}{n+2\beta}.
	\end{equation}
	Their global-existence theorem assumes $p\geq2/\eta$, $p>p_{\beta,\eta}(n)$ and, when $n\geq3$, $p\leq1+2/(n-2)$; see \cite[Theorem 3 and Remark 2.7]{DGR2019}.
	
	To investigate the expected sharpness of these existence results for effectively damped wave equations with time-dependent mass, the authors also considered the corresponding nonlinear diffusion problem.
	\begin{equation}
		\label{eq:diffusive-intro}
		\begin{cases}
			b(t)v_t-\Delta v+m^2(t)v=v^p,
			&t>0,\ x\in\mathbb{R}^n,\\
			v(0,x)=\phi(x)\geq0.
		\end{cases}
	\end{equation}
	This diffusive equation was not the main equation studied in their paper; rather, it served as a comparison model suggested by the diffusion behavior of the linear wave equations with time-dependent damping and mass. In \cite[Section 5]{DGR2019}, they sketched that, for initial data $\phi\in L^\eta(\mathbb R^n)$, $1\leq\eta\leq2$, global small-data Sobolev solutions to \eqref{eq:diffusive-intro} exist for $p>p_{\beta,\eta}(n)$, whereas global Sobolev solutions do not exist for $p<p_{\beta,\eta}(n)$ under a suitable sign assumption. For $\eta>1$, their test-function argument uses initial data satisfying an additional positive lower bound compatible with membership in $L^\eta$. Thus the diffusive problem supports the same scale $p_{\beta,\eta}(n)$ in the strict supercritical and strict subcritical ranges, while the endpoint $p=p_{\beta,\eta}(n)$ is not treated there.
	
	Motivated by this comparison, D'Abbicco, Girardi, and Reissig explicitly conjectured that, under a suitable sign assumption, global Sobolev solutions to the original Cauchy problem \eqref{eq:main} with initial data in $L^\eta$ should likewise fail to exist when $p<p_{\beta,\eta}(n)$. The nonexistence problem left open in their paper therefore concerns the same wave equations with time-dependent damping and mass, not the diffusive equation.
	
	The purpose of the present paper is to address this open problem for effectively damped wave equations with time-dependent mass in the case $\eta=1$, under additional assumptions that allow us to employ a direct adjoint-multiplier argument. More precisely, we retain the effective-damping and dominated-mass hypotheses of \cite{DGR2019}, impose an intrinsic accumulated-mass balance and a global Liouville nonoscillation condition, and prove nonexistence throughout the strict subcritical range at the scale \eqref{eq:pc}. We also treat the critical exponent $p=p_{\beta,1}(n)$ under the corresponding Osgood divergence condition. Hence, our result answers the open question concerning effectively damped wave equations with time-dependent mass within the globally nonoscillatory subclass of the coefficient regime considered in \cite{DGR2019}. In Section 7 we present explicit coefficient families satisfying simultaneously Hypotheses 1--2 of \cite{DGR2019} and all coefficient assumptions required here. Our proof belongs to the modified test-function approach developed in \cite{DAbbiccoLucente2013,IkedaSobajima2019,ISW2019}, while the diffusion clock $B(t)$ is consistent with the effective-damping theory in \cite{Wirth2007}.
	
	A key step in the proof is to construct a positive slow solution of the adjoint equation
	$$
	\rho''-(b\rho)'+m^2\rho=0.
	$$
	To guarantee its positivity on the whole half-line, we impose the
	Liouville nonoscillation condition
	$$
	m^2(t)\leq\frac{b^2(t)}4+\frac{b'(t)}2,
	\quad t\geq0.
	$$
	Indeed, under the Liouville transformation, this condition prevents zeros of the slow adjoint mode on compact time intervals.
	
	Moreover, the lower mass index $\beta$ alone does not control the
	accumulated mass
	$$
	Q(t,r):=\int_r^t\frac{m^2(\tau)}{b(\tau)},\mathrm{d}\tau.
	$$
	We therefore introduce the intrinsic excess
	$$
	\Omega_{\beta,T_0}(L)
	:=
	\sup_{\substack{t\geq r\geq T_0\
			\log X(t,r)\leq L}}
	\bigl[Q(t,r)-\beta\log X(t,r)\bigr]_+,
	\quad
	X(t,r):=\frac{1+B(t)}{1+B(r)}.
	$$
	This quantity controls the weighted volume arising in the test-function argument and yields the threshold exponent \eqref{eq:pc}, together with the Osgood condition at the critical power.
	
	\textbf{Notations:} We write $f\lesssim g$ when there exists a constant $C>0$ such that $f\le Cg$, and $f \sim g$ when $g\lesssim f\lesssim g$. For a real number $s$, we set $[s]_+:=\max\{s,0\}$. When $T=\infty$, an integral over $[0,T)$ is understood as an
	integral over $[0,\infty)$. All statements concerning inequalities
	between functions are understood to hold almost everywhere whenever
	the functions involved are only measurable.
	
	\textbf{The remainder of the paper is organized as follows.} Section \ref{sec:coefficients} introduces the assumptions on the
	damping and mass coefficients, the intrinsic accumulated-mass
	balance, and the global Liouville nonoscillation condition. It also
	derives preliminary estimates for the diffusion scale and the
	coefficients arising in the adjoint analysis.
	Section \ref{sec:adjoint-mode} constructs a global positive slow
	solution of the adjoint equation by combining a terminal Volterra
	equation on a sufficiently large time interval with a backward
	Liouville nonoscillation argument. Section \ref{sec:weak-cutoffs} defines weak solutions and establishes
	the diffusion-scale cutoff and weighted-volume estimates.
	Section \ref{sec:fundamental-osgood} derives the fundamental
	test-function inequality and the Osgood-type lemma. Section \ref{sec:main-results} proves the nonexistence and lifespan
	results, including the critical case. Section \ref{sec:examples} presents explicit coefficient families satisfying both the assumptions of \cite{DGR2019} and those imposed in this paper. Finally, Section \ref{sec:conclusion} summarizes the main conclusions.
	
	\section{Coefficient assumptions, intrinsic balance, and preliminary estimates} \label{sec:coefficients}
	
	We first recall the standard effective-damping and dominated-mass hypotheses adopted in the framework of \cite{DGR2019}.
	
	\begin{hypothesis}[Effective damping]
		\label{ass:b}
		The function $b\in \mathcal{C}^2([0,\infty))$ is positive and monotone. Moreover,
		\begin{align}
			\label{eq:b-der}
			&|b^{(k)}(t)|\leq Cb(t)(1+t)^{-k},
			\quad k=1,2, \\
			\label{eq:b-eff}
			&tb(t)\longrightarrow\infty,
			\quad
			\frac{1}{b(t)(1+t)^2}\in L^1(0,\infty),
		\end{align}
		and
		\begin{equation}
			\label{eq:b-growth}
			tb'(t)\leq a_0b(t)
			\quad \text{ for some } a_0\in[0,1).
		\end{equation}
	\end{hypothesis}
	
	\begin{hypothesis}[Dominated mass]
		\label{ass:m}
		The function $m\in \mathcal{C}^2([0,\infty))$ is positive and
		$$
		|m^{(k)}(t)|\leq Cm(t)(1+t)^{-k},
		\quad k=1,2, \quad \text{ while } \quad m(t)=o(b(t))
		\quad(t\to\infty).
		$$
	\end{hypothesis}
	
	For $t\geq r$, set
	$$
	B(t,r):=\int_r^t\frac{\mathrm{d}\tau}{b(\tau)},
	\quad
	B(t):=B(t,0),
	\quad
	Q(t,r):=\int_r^t\frac{m^2(\tau)}{b(\tau)}\mathrm{d}\tau, \quad
	X(t,r):=\frac{1+B(t)}{1+B(r)}\geq1.
	$$
	
	The preceding two hypotheses are inherited from the framework of
	\cite{DGR2019}. To control the accumulated influence of the mass term
	and to construct a global positive slow solution of the adjoint
	equation, we impose the following two additional hypotheses.
	
	\begin{hypothesis}[Intrinsic accumulated-mass balance]
		\label{ass:balance}
		Let $\beta\in[0,\infty)$ be the finite lower index defined in \eqref{eq:beta}. There exists $T_0\geq0$ such that
		$$
		\Omega_{\beta,T_0}(L):=
		\sup_{\substack{t\geq r\geq T_0\\
				\log X(t,r)\leq L}}
		\bigl[Q(t,r)-\beta\log X(t,r)\bigr]_+<\infty
		\quad \text{ for every } L\geq0.
		$$
	\end{hypothesis}
	
	\begin{hypothesis}[Global Liouville nonoscillation]
		\label{ass:liouville}
		For every $t\geq0$,
		\begin{equation}
			\label{eq:liouville}
			m^2(t)\leq\frac{b^2(t)}4+\frac{b'(t)}2.
		\end{equation}
	\end{hypothesis}
	
	\begin{remark}
		Hypothesis \ref{ass:liouville} is equivalent to the nonnegativity of
		$$
		W(t):=\frac{b^2(t)}4+\frac{b'(t)}2-m^2(t).
		$$
		It is automatically satisfied for all sufficiently large $t$. Indeed,
		$$
		\frac{m^2(t)}{b^2(t)}\longrightarrow0,
		\quad
		\left|\frac{b'(t)}{b^2(t)}\right|
		\leq\frac{C}{(1+t)b(t)}\longrightarrow0,
		$$
		so eventually
		$$
		\frac{m^2(t)}{b^2(t)}\leq\frac18,
		\quad
		\frac{b'(t)}{2b^2(t)}\geq-\frac18,
		$$
		which gives $W(t)\geq0$. A convenient separated sufficient condition is the existence of $\delta\in[0,1/4)$ such that
		$$
		b'(t)\geq-2\delta b^2(t),
		\quad
		m^2(t)\leq\left(\frac14-\delta\right)b^2(t)
		\quad(t\geq0).
		$$
		Indeed, these inequalities imply
		$$
		\frac{b^2(t)}4+\frac{b'(t)}2
		\geq\left(\frac14-\delta\right)b^2(t)\geq m^2(t).
		$$
		In particular, if $b$ is nondecreasing, then $m(t)\leq b(t)/2$ is sufficient.
	\end{remark}
	
	\begin{definition}
		\label{def:sublinear}
		The intrinsic excess is called sublinear if
		$$
		\lim_{L\to\infty}\frac{\Omega_{\beta,T_0}(L)}{L}=0.
		$$
	\end{definition}
	
	\begin{remark}
		By definition,
		\begin{equation}
			\label{eq:balance-exp}
			e^{Q(t,r)}
			\leq X(t,r)^\beta
			\exp\bigl(\Omega_{\beta,T_0}(\log X(t,r))\bigr)
			\quad(t\geq r\geq T_0).
		\end{equation}
		Moreover, $\Omega_{\beta,T_0}$ is nonnegative and nondecreasing. Since $X(t,r)\geq1$, the condition $\log X(t,r)\leq0$ forces $t=r$; hence
		$$
		\Omega_{\beta,T_0}(0)=0.
		$$
	\end{remark}
	
	\begin{remark}[The lower index does not imply sublinear intrinsic excess]
		\label{rem:oscillatory-excess}
		Let $b(t)\equiv1$, let $\beta_0>0$ and $\delta>0$, and define
		$$
		m^2(t):=\frac{\beta_0+\delta\bigl(1+\sin(\log(1+t))\bigr)}{1+t}.
		$$
		Then Hypotheses \ref{ass:b}--\ref{ass:m} hold. Indeed, set
		$$
		a(t):=\beta_0+\delta\bigl(1+\sin(\log(1+t))\bigr).
		$$
		Since
		$$
		\beta_0\leq a(t)\leq\beta_0+2\delta,
		$$
		we have $m(t)=(1+t)^{-1/2}a(t)^{1/2}>0$. Moreover,
		$$
		a'(t)=\frac{\delta\cos(\log(1+t))}{1+t}
		$$
		and
		$$
		a''(t)=
		-\frac{\delta\bigl(\sin(\log(1+t))+\cos(\log(1+t))\bigr)}{(1+t)^2}.
		$$
		Hence
		$$
		|a'(t)|\leq\frac{\delta}{1+t},
		\quad
		|a''(t)|\leq\frac{2\delta}{(1+t)^2}.
		$$
		Differentiating $m=(1+t)^{-1/2}a^{1/2}$ gives
		$$
		\frac{m'(t)}{m(t)}
		=-\frac{1}{2(1+t)}+\frac{a'(t)}{2a(t)},
		$$
		and therefore
		$$
		|m'(t)|\leq C m(t)(1+t)^{-1}.
		$$
		A second differentiation yields
		$$
		\frac{m''(t)}{m(t)}
		=\left(\frac{m'(t)}{m(t)}\right)^2
		+\frac{1}{2(1+t)^2}
		+\frac{a''(t)}{2a(t)}
		-\frac{(a'(t))^2}{2a^2(t)},
		$$
		so
		$$
		|m''(t)|\leq C m(t)(1+t)^{-2}.
		$$
		Finally,
		$$
		m(t)\sim(1+t)^{-1/2}=o(1)=o(b(t)).
		$$
		Thus the derivative and domination conditions in Hypothesis \ref{ass:m} are satisfied, while Hypothesis \ref{ass:b} is immediate for $b\equiv1$.
		
		In this example,
		$$
		B(t)=t.
		$$
		Since
		$$
		B(t)m^2(t)
		=\frac{t}{1+t}
		\left[\beta_0+\delta\bigl(1+\sin(\log(1+t))\bigr)\right],
		$$
		we obtain
		$$
		\liminf_{t\to\infty}B(t)m^2(t)=\beta_0.
		$$
		Furthermore, for $t\geq r\geq0$,
		$$
		Q(t,r)=\int_r^t
		\frac{\beta_0+\delta+\delta\sin(\log(1+\tau))}{1+\tau}
		\,\mathrm d\tau=(\beta_0+\delta)\log\frac{1+t}{1+r}
		+\delta\bigl(\cos(\log(1+r))-\cos(\log(1+t))\bigr).
		$$
		Because
		$$
		X(t,r)=\frac{1+t}{1+r},
		$$
		it follows that
		$$
		Q(t,r)-\beta_0\log X(t,r)
		=\delta\log X(t,r)+\delta\bigl(\cos(\log(1+r))-\cos(\log(1+t))\bigr).
		$$
		Therefore, whenever $\log X(t,r)\leq L$,
		$$
		Q(t,r)-\beta_0\log X(t,r)\leq\delta L+2\delta,
		$$
		which gives
		$$
		\Omega_{\beta_0,0}(L)\leq\delta L+2\delta.
		$$
		On the other hand, choosing $r=0$ and $t=e^L-1$, we have $\log X(t,0)=L$ and
		$$
		Q(t,0)-\beta_0\log X(t,0)
		=\delta L+\delta(1-\cos L)\geq\delta L.
		$$
		Consequently,
		$$
		\delta L\leq\Omega_{\beta_0,0}(L)\leq\delta L+2\delta,
		$$
		and hence
		$$
		\lim_{L\to\infty}
		\frac{\Omega_{\beta_0,0}(L)}{L}=\delta>0.
		$$
		Thus the intrinsic excess is finite on every bounded interval, so Hypothesis \ref{ass:balance} holds, but it is not sublinear. In particular, the lower index $\beta$ alone does not control the accumulated mass at a subpower level.
		
		If, in addition,
		$$
		\beta_0+2\delta\leq\frac14,
		$$
		then
		$$
		m^2(t)\leq\frac14
		=\frac{b^2(t)}4+\frac{b'(t)}2,
		$$
		so Hypothesis \ref{ass:liouville} also holds. Therefore sublinear intrinsic excess is not a consequence even of Hypotheses \ref{ass:b}--\ref{ass:m}, and \ref{ass:liouville} taken together.
	\end{remark}
	
	\begin{lemma}[Diffusion scale]
		\label{lem:B}
		Under Hypothesis \ref{ass:b},
		\begin{equation}
			\label{eq:Bscale}
			1+B(t)\sim1+\frac{t}{b(t)}.
		\end{equation}
		Moreover,
		\begin{equation}
			\label{eq:B-infinity}
			B(t)\longrightarrow\infty
			\quad(t\to\infty).
		\end{equation}
		In particular,
		\begin{equation}
			\label{eq:Bdoubling}
			\sup_{t\geq0}\frac{1+B(2t+1)}{1+B(t)}<\infty,
		\end{equation}
		and
		\begin{equation}
			\label{eq:bB}
			\sup_{t\geq0}\frac{1}{b^2(t)(1+B(t))}<\infty.
		\end{equation}
	\end{lemma}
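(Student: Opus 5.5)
The core of the argument is a two-sided comparison of $B(t)=\int_0^t b(\tau)^{-1}\,\mathrm d\tau$ with $t/b(t)$, obtained from \eqref{eq:b-growth} and \eqref{eq:b-der}; the three consequences then follow quickly.

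\emph{Upper bound.} We use the growth condition $tb'\le a_0 b$. It says that $\tau\mapsto \tau^{-a_0}b(\tau)$ is nonincreasing, so $b(\tau)\ge b(t)(\tau/t)^{a_0}$ for $0<\tau\le t$. Integrating gives
$$
B(t)\le \frac{t^{a_0}}{b(t)}\int_0^t\tau^{-a_0}\,\mathrm d\tau=\frac{t}{(1-a_0)b(t)}.
$$
This is exactly where $a_0<1$ is needed.

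\emph{Lower bound.} For $t\ge1$, we use \eqref{eq:b-der} with $k=1$. It gives $|(\log b)'|\le C/(1+\tau)$, so $b(\tau)\le C'b(t)$ for $\tau\in[t/2,t]$, since $\log\frac{1+t}{1+\tau}\le\log2$. Hence
$$
B(t)\ge\int_{t/2}^t\frac{\mathrm d\tau}{b(\tau)}\ge \frac{t}{2C'b(t)}.
$$
On $[0,1]$, both $B$ and $t/b(t)$ are bounded, because $b$ is continuous and positive. So $1+B(t)\sim1+t/b(t)$ there trivially, and \eqref{eq:Bscale} follows.

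\emph{Consequences.} For \eqref{eq:B-infinity}: if $b$ is nonincreasing, then $t/b(t)\ge t/b(0)\to\infty$. If $b$ is nondecreasing, I would instead use the growth condition again, which gives $b(t)\le b(1)t^{a_0}$ for $t\ge1$, so $t/b(t)\gtrsim t^{1-a_0}\to\infty$. (Alternatively, $B(t)\ge\int_1^t d\tau/(b(1)\tau^{a_0})$ directly.) For \eqref{eq:Bdoubling}, the same log-derivative bound gives $b(2t+1)\sim b(t)$, since $(1+2t+1)/(1+t)\le2$. Then
$$
\frac{1+B(2t+1)}{1+B(t)}\sim\frac{1+(2t+1)/b(2t+1)}{1+t/b(t)}\lesssim \frac{1+(2t+1)/b(t)}{1+t/b(t)}\le 3,
$$
the last step by comparing the terms with and without $1/b(t)$. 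For \eqref{eq:bB}, by \eqref{eq:Bscale} it suffices to bound $1/(b^2(1+t/b))=1/(b^2+tb)$. Since $tb(t)\to\infty$ by \eqref{eq:b-eff}, we have $tb\ge1$ for $t\ge t_1$. On $[0,t_1]$, $b$ is bounded below by a positive constant by continuity, so $b^2$ is too.

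\emph{Main obstacle.} I expect the main care to lie in the upper bound near $\tau=0$, where the power comparison $b(\tau)\ge b(t)(\tau/t)^{a_0}$ degenerates. There I would restrict the monotonicity argument to $[1,t]$ and handle $[0,1]$ separately as a bounded contribution. This yields the upper bound $B(t)\lesssim 1+t/b(t)$, which is all that is needed.
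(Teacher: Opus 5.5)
Your proposal is correct and follows essentially the paper's route: the $[t/2,t]$ log-derivative comparability gives the lower bound, the growth condition gives the upper bound, and the three consequences follow the same way. Your upper bound via monotonicity of $\tau^{-a_0}b(\tau)$ even avoids the paper's monotone case split, and your worry about $\tau=0$ is unfounded, since $\int_0^t\tau^{-a_0}\,\mathrm d\tau$ converges for $a_0<1$ (as your own display already shows). One small slip: the bound $\frac{1+(2t+1)/b(t)}{1+t/b(t)}\le 3$ holds only for $t\ge1$ (it fails at $t=0$ if $b(0)<1/2$), but the ratio is still bounded because $b$ is positive and continuous on $[0,1]$.
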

	
	\begin{proof}
		We first prove the assertion for large $t$. From \eqref{eq:b-der},
		$$
		\left|\log\frac{b(\tau)}{b(t)}\right|
		\leq C\int_\tau^t\frac{\mathrm{d}s}{1+s}
		\leq C\log2,
		\quad \frac{t}{2}\leq\tau\leq t.
		$$
		Hence $b(\tau)\sim b(t)$ uniformly on $[t/2,t]$, and
		$$
		B(t)\geq\int_{t/2}^t\frac{\mathrm{d}\tau}{b(\tau)}
		\geq c\frac{t}{b(t)}.
		$$
		If $b$ is nonincreasing, then $b(\tau)\geq b(t)$ for $0\leq\tau\leq t$, so
		$$
		B(t)\leq\frac{t}{b(t)}.
		$$
		If $b$ is nondecreasing, then for $1\leq\tau\leq t$, \eqref{eq:b-growth} yields
		$$
		\log\frac{b(t)}{b(\tau)}
		=\int_\tau^t\frac{b'(s)}{b(s)}\,\mathrm{d}s
		\leq a_0\int_\tau^t\frac{\mathrm{d}s}{s}
		=a_0\log\frac{t}{\tau},
		$$
		whence
		$$
		b(\tau)\geq b(t)\left(\frac{\tau}{t}\right)^{a_0}.
		$$
		Therefore,
		$$
		\int_1^t\frac{\mathrm{d}\tau}{b(\tau)}
		\leq\frac{t^{a_0}}{b(t)}\int_1^t\tau^{-a_0}\,\mathrm{d}\tau
		\leq\frac{t}{(1-a_0)b(t)}.
		$$
		The integral over $[0,1]$ is constant, and continuity handles bounded $t$. This proves \eqref{eq:Bscale}. If $b$ is nonincreasing, then $b(t)\leq b(0)$ and $t/b(t)\geq t/b(0)\to\infty$. If $b$ is nondecreasing, the preceding integration with $\tau=1$ gives $b(t)\leq C t^{a_0}$ for $t\geq1$, and hence $t/b(t)\geq C^{-1}t^{1-a_0}\to\infty$. Relation \eqref{eq:Bscale} therefore proves \eqref{eq:B-infinity}.
		
		Next, \eqref{eq:b-der} gives
		$$
		\left|\log\frac{b(2t+1)}{b(t)}\right|
		\leq C\int_t^{2t+1}\frac{\mathrm{d}s}{1+s}
		\leq C\log2,
		$$
		so $b(2t+1)\sim b(t)$. Combining this with \eqref{eq:Bscale} gives \eqref{eq:Bdoubling}. Finally,
		$$
		b^2(t)(1+B(t))\sim b^2(t)+tb(t).
		$$
		The right-hand side tends to infinity because $tb(t)\to\infty$ and is positive on every compact interval. Hence its reciprocal is bounded, proving \eqref{eq:bB}.
	\end{proof}
	
	\begin{lemma}[Asymptotic bounds for the adjoint coefficients]
		\label{lem:AEH}
		Under Hypotheses \ref{ass:b}--\ref{ass:balance}, define
		\begin{equation}
			\label{eq:AEH}
			q(t):=\frac{m^2(t)}{b(t)},
			\quad
			A(t):=q(t)-\frac{b'(t)}{b(t)},
			\quad
			E(t):=A'(t)+A^2(t),
			\quad
			H(t):=b(t)-2A(t).
		\end{equation}
		Then
		$$
		q(t)\lesssim(1+t)^{-1},
		\quad
		|A(t)|\lesssim(1+t)^{-1},
		\quad
		|E(t)|\lesssim(1+t)^{-2}.
		$$
		Moreover,
		$$
		\frac{A(t)}{b(t)}\longrightarrow0,
		\quad
		H(t)\sim b(t)
		\quad \text{ for all sufficiently large } t,
		$$
		and
		$$
		\frac{|H'(t)|}{H^2(t)}\longrightarrow0.
		$$
		Moreover,
		\begin{equation}
			\label{eq:E-tail}
			\int_T^\infty\frac{|E(t)|}{H(t)}\mathrm{d}t<\infty
		\end{equation}
		for every sufficiently large $T$.
	\end{lemma}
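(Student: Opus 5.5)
The argument is a direct computation from the pointwise bounds in Hypotheses \ref{ass:b}--\ref{ass:m} together with Lemma \ref{lem:B}. Hypothesis \ref{ass:balance} is not needed for these bounds; it only fixes $\beta$ as a finite number. We prove the estimates in the order $q$, $A$, $E$, $H$, $H'/H^2$, and finally \eqref{eq:E-tail}.

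\emph{Bound on $q$.} From the definition of $\beta$ alone we only get a $\liminf$, so that route is unavailable. Instead we use $m=o(b)$ together with the regularity of $m$. Since $|m'|\le Cm(1+t)^{-1}$, the function $m$ is comparable to a constant on dyadic intervals, and the same holds for $b$ by \eqref{eq:b-der}. Hence $q=m^2/b$ is comparable to a constant on $[t/2,t]$, and
$$
Q(t,t/2)\sim t\,q(t).
$$
Hypothesis \ref{ass:balance} with $r=t/2\ge T_0$ gives
$$
Q(t,t/2)\le \beta\log X(t,t/2)+\Omega_{\beta,T_0}(\log X(t,t/2)).
$$
By \eqref{eq:Bdoubling}, $\log X(t,t/2)$ is bounded by a fixed constant $L_0$. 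Therefore $t\,q(t)\le \beta L_0+\Omega_{\beta,T_0}(L_0)$, that is, $q\lesssim(1+t)^{-1}$ for large $t$. Continuity handles bounded $t$. So the balance hypothesis is in fact used here, and this is the main step.

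\emph{Bounds on $A$ and $E$.} Since $|b'/b|\lesssim(1+t)^{-1}$, we obtain $|A|\lesssim(1+t)^{-1}$. For $E=A'+A^2$ it remains to bound $A'$. We have
$$
q'=q\Bigl(\frac{2m'}{m}-\frac{b'}{b}\Bigr),
\qquad
\Bigl(\frac{b'}{b}\Bigr)'=\frac{b''}{b}-\Bigl(\frac{b'}{b}\Bigr)^2 .
$$
Both are $O((1+t)^{-2})$ by the $k=1,2$ derivative bounds and the bound on $q$. Hence $|E|\lesssim(1+t)^{-2}$.

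\emph{Behaviour of $H$.} Next,
$$
\frac{|A|}{b}\lesssim\frac{1}{(1+t)b}\to0,
$$
because $tb\to\infty$. It follows that $H=b-2A=b\,(1-2A/b)\sim b$ for large $t$. Also $H'=b'-2A'$, and $|H'|\lesssim b(1+t)^{-1}+(1+t)^{-2}$. Dividing by $H^2\sim b^2$ gives
$$
\frac{|H'|}{H^2}\lesssim\frac{1}{(1+t)b}+\frac{1}{(1+t)^2b^2}\to0 .
$$

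\emph{Tail integral \eqref{eq:E-tail}.} For large $T$,
$$
\frac{|E|}{H}\lesssim\frac{1}{b(1+t)^2},
$$
which is integrable by \eqref{eq:b-eff}.
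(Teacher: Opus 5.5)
Your proof is correct and follows essentially the same route as the paper. The key step bounds $(1+t)q(t)$ by $Q$ over a dyadic interval, using the logarithmic-derivative bound on $q$, \eqref{eq:Bdoubling}, and Hypothesis \ref{ass:balance}; you use $[t/2,t]$ where the paper uses $[t,2t+1]$. The bounds on $A$, $E$, $H$, $H'/H^2$ and the tail integral are then the same direct computations. You should delete your opening claim that Hypothesis \ref{ass:balance} is not needed, because your own argument correctly relies on it for the bound on $q$; the mention of $m=o(b)$ there is also superfluous.
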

	
	\begin{proof}
		Since $q=m^2/b>0$, the derivative assumptions give
		$$
		\frac{q'(t)}{q(t)}
		=2\frac{m'(t)}{m(t)}-\frac{b'(t)}{b(t)},
		\quad
		\left|\frac{q'(t)}{q(t)}\right|\leq\frac{C}{1+t}.
		$$
		For $t\leq\tau\leq2t+1$ and large $t$, integration yields
		$$
		\log\frac{q(\tau)}{q(t)}
		\geq-C\log\frac{1+\tau}{1+t}
		\geq-C\log2,
		$$
		so $q(\tau)\geq c q(t)$ with $c>0$ independent of $t$. By \eqref{eq:Bdoubling}, there exists $L_0>0$ such that
		$$
		0\leq\log X(2t+1,t)\leq L_0
		$$
		for all sufficiently large $t$. Taking $t\geq T_0$, Hypothesis \ref{ass:balance} gives
		$$
		Q(2t+1,t)
		\leq\beta\log X(2t+1,t)
		+\Omega_{\beta,T_0}(\log X(2t+1,t))
		\leq \beta L_0+\Omega_{\beta,T_0}(L_0).
		$$
		Consequently,
		$$
		(1+t)q(t)
		\leq C\int_t^{2t+1}q(\tau)\,\mathrm{d}\tau
		=C Q(2t+1,t)\leq C,
		$$
		which proves $q(t)\lesssim(1+t)^{-1}$.
		
		The logarithmic derivative estimate then gives
		$$
		|q'(t)|\leq\frac{Cq(t)}{1+t}\leq\frac{C}{(1+t)^2}.
		$$
		Moreover,
		$$
		\left(\frac{b'}{b}\right)'
		=\frac{b''}{b}-\left(\frac{b'}{b}\right)^2,
		$$
		and \eqref{eq:b-der} implies
		$$
		\left|\left(\frac{b'}{b}\right)'\right|\leq\frac{C}{(1+t)^2}.
		$$
		Thus
		$$
		|A(t)|\leq\frac{C}{1+t},
		\quad
		|A'(t)|\leq\frac{C}{(1+t)^2},
		$$
		and hence
		$$
		|E(t)|=|A'(t)+A^2(t)|\leq\frac{C}{(1+t)^2}.
		$$
		Since $tb(t)\to\infty$,
		$$
		\frac{|A(t)|}{b(t)}\leq\frac{C}{(1+t)b(t)}\longrightarrow0.
		$$
		Therefore $H(t)=b(t)-2A(t)$ is positive for all sufficiently large $t$ and $H(t)/b(t)\to1$.
		
		Because $H'=b'-2A'$, for sufficiently large $t$,
		$$
		\frac{|H'(t)|}{H^2(t)}
		\leq C\left(
		\frac{|b'(t)|}{b^2(t)}+\frac{|A'(t)|}{b^2(t)}
		\right)
		\leq C\left(
		\frac{1}{(1+t)b(t)}+\frac{1}{(1+t)^2b^2(t)}
		\right)\longrightarrow0.
		$$
		Finally, $H(t)\sim b(t)$ and the estimate for $E$ imply
		$$
		\frac{|E(t)|}{H(t)}\leq\frac{C}{b(t)(1+t)^2}
		$$
		for all sufficiently large $t$. The last function is integrable by \eqref{eq:b-eff}, which proves \eqref{eq:E-tail}.
	\end{proof}
	
	\section{Construction of a global positive slow adjoint mode} \label{sec:adjoint-mode}
	
	\begin{lemma}[Backward kernel estimate]
		\label{lem:kernel}
		Let $T\geq0$, let $H\in \mathcal{C}^1([T,\infty))$ be positive, and assume
		$$
		\kappa_T:=\sup_{t\geq T}\frac{|H'(t)|}{H^2(t)}<1.
		$$
		Then, for $\sigma\geq t\geq T$,
		\begin{equation}
			\label{eq:kernel-est}
			K_T(t,\sigma):=
			\int_t^\sigma
			\exp\left(-\int_\tau^\sigma H(r)\mathrm{d}r\right)\mathrm{d}\tau
			\leq\frac{1}{(1-\kappa_T)H(\sigma)},
		\end{equation}
		and
		\begin{equation}
			\label{eq:Htransport}
			H(\sigma)
			\exp\left(-\int_t^\sigma H(r)\mathrm{d}r\right)
			\leq H(t).
		\end{equation}
	\end{lemma}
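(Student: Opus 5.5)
The transport inequality \eqref{eq:Htransport} comes first, since it is the simpler of the two, and the kernel bound \eqref{eq:kernel-est} then follows by an integration by parts.

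For \eqref{eq:Htransport}, fix $t\geq T$ and consider, for $\sigma\geq t$,
$$
\phi(\sigma):=\log H(\sigma)-\int_t^\sigma H(r)\,\mathrm dr .
$$
Its derivative is
$$
\phi'(\sigma)=\frac{H'(\sigma)}{H(\sigma)}-H(\sigma)=H(\sigma)\left(\frac{H'(\sigma)}{H^2(\sigma)}-1\right)\leq H(\sigma)(\kappa_T-1)<0 .
$$
Hence $\phi(\sigma)\leq\phi(t)=\log H(t)$. Exponentiating gives \eqref{eq:Htransport}.

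For \eqref{eq:kernel-est}, fix $\sigma$ and write $E(\tau):=\exp\bigl(-\int_\tau^\sigma H\bigr)$. Then $E'(\tau)=H(\tau)E(\tau)$, so we can rewrite the integrand as
$$
E=\frac{E'}{H}.
$$
Integrating by parts on $[t,\sigma]$ yields
$$
K_T(t,\sigma)=\left[\frac{E}{H}\right]_t^\sigma+\int_t^\sigma\frac{H'(\tau)}{H^2(\tau)}E(\tau)\,\mathrm d\tau
\leq\frac{1}{H(\sigma)}-\frac{E(t)}{H(t)}+\kappa_T K_T(t,\sigma).
$$
We drop the nonpositive term $-E(t)/H(t)$. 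Since $K_T(t,\sigma)$ is finite (the integrand is continuous on a compact interval) and $\kappa_T<1$, we can absorb $\kappa_T K_T$ into the left-hand side. This gives
$$
(1-\kappa_T)K_T(t,\sigma)\leq\frac{1}{H(\sigma)},
$$
which is \eqref{eq:kernel-est}.

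The only delicate points are that $H\in\mathcal C^1$ and positivity justify the integration by parts, and that finiteness of $K_T$ allows the absorption step. Neither poses a real obstacle.
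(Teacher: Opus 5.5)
Your proof is correct and is essentially the paper's argument. For the kernel bound, the paper differentiates $F_\sigma(\tau)=H(\tau)^{-1}\exp\bigl(-\int_\tau^\sigma H\bigr)$ and integrates the pointwise bound $F_\sigma'\geq(1-\kappa_T)\exp\bigl(-\int_\tau^\sigma H\bigr)$, which is your integration by parts done without the absorption step; for the transport inequality it integrates $H'/H\leq\kappa_T H$, the integrated form of your monotonicity argument for $\phi$.
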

	
	\begin{proof}
		Fix $\sigma\geq t$ and define
		$$
		F_\sigma(\tau):=\frac{1}{H(\tau)}
		\exp\left(-\int_\tau^\sigma H(r)\,\mathrm{d}r\right),
		\quad t\leq\tau\leq\sigma.
		$$
		A direct differentiation gives
		$$
		F_\sigma'(\tau)
		=\exp\left(-\int_\tau^\sigma H(r)\,\mathrm{d}r\right)
		\left(1-\frac{H'(\tau)}{H^2(\tau)}\right).
		$$
		Since $|H'|/H^2\leq\kappa_T<1$,
		$$
		F_\sigma'(\tau)
		\geq(1-\kappa_T)
		\exp\left(-\int_\tau^\sigma H(r)\,\mathrm{d}r\right).
		$$
		Integration from $t$ to $\sigma$ yields
		$$
		(1-\kappa_T)K_T(t,\sigma)
		\leq F_\sigma(\sigma)-F_\sigma(t)
		\leq\frac{1}{H(\sigma)},
		$$
		which proves \eqref{eq:kernel-est}.
		
		For \eqref{eq:Htransport},
		$$
		\log\frac{H(\sigma)}{H(t)}
		=\int_t^\sigma\frac{H'(r)}{H(r)}\,\mathrm{d}r
		\leq\int_t^\sigma\frac{|H'(r)|}{H^2(r)}H(r)\,\mathrm{d}r
		\leq\kappa_T\int_t^\sigma H(r)\,\mathrm{d}r.
		$$
		Exponentiating and multiplying by $\exp(-\int_t^\sigma H)$ gives
		$$
		H(\sigma)\exp\left(-\int_t^\sigma H(r)\,\mathrm{d}r\right)
		\leq H(t)
		\exp\left(-(1-\kappa_T)\int_t^\sigma H(r)\,\mathrm{d}r\right)
		\leq H(t).
		$$
	\end{proof}
	
	\begin{theorem}[Global positive slow adjoint mode]
		\label{thm:rho}
		Assume Hypotheses \ref{ass:b}--\ref{ass:liouville}. Then the adjoint equation
		\begin{equation}
			\label{eq:adjoint}
			\rho''-(b\rho)'+m^2\rho=0
		\end{equation}
		admits a solution $\rho\in \mathcal{C}^2([0,\infty))$ such that
		\begin{align}
			\label{eq:rho-sign}
			&\rho(t)>0,
			\quad
			b(t)\rho(t)-\rho'(t)>0
			\quad(t\geq0), \\
			\label{eq:rho-der}
			&|\rho'(t)|\leq Cb(t)\rho(t)
			\quad(t\geq0),
		\end{align}
		and
		\begin{equation}
			\label{eq:rho-growth}
			b(t)\rho(t)
			\leq C(1+B(t))^\beta
			\exp\bigl(\Omega_{\beta,T_0}(\log(1+B(t)))\bigr)
			\quad(t\geq0).
		\end{equation}
		More precisely, there exists $T_*\geq T_0$ such that
		$$
		\rho(t)\sim\frac{1}{b(t)}e^{Q(t,T_*)},
		\quad t\geq T_*,
		$$
		and
		\begin{equation}
			\label{eq:rho-small-der}
			\left|\frac{\rho'(t)}{b(t)\rho(t)}\right|\leq\frac14,
			\quad t\geq T_*.
		\end{equation}
	\end{theorem}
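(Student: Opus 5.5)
The plan is to build $\rho$ on a tail $[T_*,\infty)$ as a perturbation of the explicit slow profile $\rho_0:=b^{-1}e^{Q(\cdot,T_*)}$, and then continue it backward to $[0,T_*]$ using the Liouville transformation. The first step is to rewrite \eqref{eq:adjoint} as a Riccati equation. Since $\rho_0'/\rho_0=q-b'/b=A$, I set $v:=\rho'/\rho=A+w$. Substituting into \eqref{eq:adjoint} and using the identity $bA=m^2-b'$, the terms of order zero in $w$ cancel except for $E$, and one obtains
$$
w'=H w-w^2-E ,
$$
with $A,E,H$ as in Lemma~\ref{lem:AEH}. Because $H\sim b>0$ is large, the equation is unstable forward in time, so the bounded solution is selected by a terminal condition at $+\infty$:
$$
w(t)=\int_t^\infty \exp\Bigl(-\int_t^\sigma H(r)\,\mathrm dr\Bigr)\bigl(E(\sigma)+w^2(\sigma)\bigr)\,\mathrm d\sigma .
$$
If one prefers to avoid improper integrals, this can be realised as a Volterra equation on $[t,S]$ followed by the limit $S\to\infty$.

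The second step solves this fixed-point problem on $[T,\infty)$ for $T$ large. By Lemma~\ref{lem:AEH} we have $\kappa_T:=\sup_{t\ge T}|H'|/H^2<1/2$ and $\int_T^\infty|E|/H<\infty$. I would work in the space with norm $\sup_{t\ge T}|w(t)|/H(t)$ together with the $L^1(T,\infty)$ norm. The key estimate comes from exchanging the order of integration and applying \eqref{eq:kernel-est}:
$$
\int_T^\infty\!\int_t^\infty e^{-\int_t^\sigma H}|E(\sigma)|\,\mathrm d\sigma\,\mathrm dt
=\int_T^\infty K_T(T,\sigma)|E(\sigma)|\,\mathrm d\sigma
\le \frac{1}{1-\kappa_T}\int_T^\infty\frac{|E|}{H},
$$
and this is small when $T$ is large. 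For the pointwise control, I would bound $\int_t^\infty e^{-\int_t^\sigma H}|E|\,\mathrm d\sigma$ by $\sup_{\sigma\ge t}|E(\sigma)|/H(\sigma)$, using $\int_t^\infty e^{-\int_t^\sigma H}H(\sigma)\,\mathrm d\sigma=1$. Since $|E|/H\lesssim (1+t)^{-2}b^{-1}$ and $(1+t)b\to\infty$, this is $o(H(t))$, and the quadratic term is handled in the same way. A contraction in a small ball then produces $w$ with $|w|\le \varepsilon H$ and $\int_T^\infty|w|<\infty$.

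I expect the main obstacle to be getting these estimates uniform enough to close the contraction, in particular controlling $w^2$ through $\sup|w|/H$ when only the ratios $|E|/H$ and $|H'|/H^2$ are known to be small. If that is too delicate, I would first try the weight $(1+t)^{-1}$ for $w$, which is compatible with $|E|\lesssim(1+t)^{-2}$.

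With $w$ in hand, fix $T_*\ge\max(T,T_0)$ and define $\rho(t):=\exp\bigl(\int_{T_*}^t v\bigr)$ for $t\ge T_*$. Then
$$
\rho=\rho_0(T_*)^{-1}\,\rho_0\,e^{\int_{T_*}^t w},
$$
and since $w\in L^1$ this gives $\rho\sim b^{-1}e^{Q(t,T_*)}$. Moreover, $|\rho'|/(b\rho)\le |A|/b+|w|/b\le 1/4$ for $T_*$ large, because $|A|/b\to0$ by Lemma~\ref{lem:AEH} and $|w|\le\varepsilon H\sim\varepsilon b$; this is \eqref{eq:rho-small-der}. For the growth bound \eqref{eq:rho-growth}, note that $b\rho\lesssim e^{Q(t,T_*)}\le X(t,T_*)^\beta\exp\bigl(\Omega_{\beta,T_0}(\log X(t,T_*))\bigr)$ by \eqref{eq:balance-exp}. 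Since $X(t,T_*)\le 1+B(t)$ and $\Omega_{\beta,T_0}$ is nondecreasing, the right-hand side is at most $(1+B(t))^\beta\exp\bigl(\Omega_{\beta,T_0}(\log(1+B(t)))\bigr)$. On $[0,T_*]$, the quantity $b\rho$ is bounded by continuity, so \eqref{eq:rho-growth} holds on all of $[0,\infty)$.

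Finally, extend $\rho$ to $[0,T_*]$ as the solution of the linear equation \eqref{eq:adjoint} with the data at $T_*$. Write $\rho=e^{\frac12\int_0^t b}\,y$. A direct computation shows that $y$ satisfies $y''=Wy$, where $W\ge0$ by Hypothesis~\ref{ass:liouville}. At $T_*$ we have $y>0$, and
$$
y'=e^{-\frac12\int_0^t b}\Bigl(\rho'-\tfrac b2\rho\Bigr)\le e^{-\frac12\int_0^t b}\Bigl(\tfrac14-\tfrac12\Bigr)b\rho<0 .
$$
Moving backward in time, as long as $y>0$ we have $y''\ge0$, so $y'(t)\le y'(T_*)<0$ for $t<T_*$. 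Hence $y$ increases as $t$ decreases and cannot vanish, so $\rho>0$ on $[0,T_*]$. Since $b\rho-\rho'=e^{\frac12\int_0^t b}\bigl(\tfrac b2 y-y'\bigr)>0$, the second inequality in \eqref{eq:rho-sign} also holds there; on $[T_*,\infty)$ it follows from \eqref{eq:rho-small-der}. The derivative bound \eqref{eq:rho-der} holds on the compact interval $[0,T_*]$ by continuity together with $b\rho>0$, and on $[T_*,\infty)$ it is exactly \eqref{eq:rho-small-der}.
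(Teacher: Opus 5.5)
Your argument is correct, but on the tail $[T_*,\infty)$ it takes a different route from the paper. The paper writes $\rho=w_*v$ with $w_*=b^{-1}e^{Q(\cdot,T_*)}$ and solves the \emph{linear} equation $v''-Hv'+Ev=0$ through the terminal Volterra equation $v(t)=1-\int_t^\infty K_{T_*}(t,\sigma)E(\sigma)v(\sigma)\,\mathrm d\sigma$. That equation is a contraction in the plain sup norm on all of $C_b([T_*,\infty))$, with constant $\nu_*\le1/4$. The single bound $2/3\le v\le4/3$ then gives both positivity and $\rho\sim b^{-1}e^{Q(t,T_*)}$, and \eqref{eq:rho-small-der} follows from the explicit formula for $v'$ together with \eqref{eq:Htransport}.

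Your Riccati equation is exactly the equation satisfied by $v'/v$, so you are solving the logarithmic form of the same problem. What you gain is that positivity on the tail is automatic from $\rho=\exp\int(A+w)$, and $\rho'/(b\rho)=(A+w)/b$ can be read off directly. What you pay is a small-ball contraction in a weighted norm, a quadratic estimate, and a separate $L^1$ bound on $w$ to get the two-sided comparison. Your Fubini/\eqref{eq:kernel-est} argument for the $L^1$ part is fine. The backward Liouville continuation, the growth bound via \eqref{eq:balance-exp}, and the compactness argument on $[0,T_*]$ coincide with the paper's.

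The step you flagged does need a different estimate from the one you sketched. Treating $w^2$ ``in the same way'' as $E$ only gives $\int_t^\infty e^{-\int_t^\sigma H}w^2\,\mathrm d\sigma\le\varepsilon^2\sup_{\sigma\ge t}H(\sigma)$. This is infinite when $b$ is increasing and unbounded, e.g.\ $b=\mu(1+t)^k$ with $0<k<1$.

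There are two ways to close it:
\begin{itemize}
\item Use the sharper inequality proved inside Lemma~\ref{lem:kernel}, namely $H(\sigma)e^{-\int_t^\sigma H}\le H(t)e^{-(1-\kappa_T)\int_t^\sigma H}$. It gives $\int_t^\infty e^{-\int_t^\sigma H}H(\sigma)^2\,\mathrm d\sigma\le H(t)/(1-\kappa_T)$. With $\kappa_T\le1/2$, both the self-map and the contraction estimates in the ball $|w|\le\varepsilon H$ then close with constants $O(\varepsilon)$.
\item Use your fallback weight $(1+t)^{-1}$, which works verbatim with your sup-of-ratio estimate. Here $|E|+w^2\lesssim(1+\sigma)^{-2}$ and $\sup_{\sigma\ge t}\bigl[(1+\sigma)^2H(\sigma)\bigr]^{-1}=o\bigl((1+t)^{-1}\bigr)$. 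This weight also makes the $L^1$ bound immediate, since $(|E|+w^2)/H\lesssim\bigl[(1+\sigma)^2b(\sigma)\bigr]^{-1}$ is integrable by \eqref{eq:b-eff}.
\end{itemize}
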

	
	\begin{proof}
		Let $A,E,H$ be defined by \eqref{eq:AEH}. Lemma \ref{lem:AEH} gives
		$$
		\frac{H(t)}{b(t)}\longrightarrow1,
		\quad
		\frac{|H'(t)|}{H^2(t)}\longrightarrow0,
		\quad
		\int_T^\infty\frac{|E(t)|}{H(t)}\,\mathrm{d}t<\infty
		$$
		for all sufficiently large $T$. Choose $T_*\geq T_0$ so large that
		$$
		H(t)>0,
		\quad
		\kappa_*:=\sup_{t\geq T_*}\frac{|H'(t)|}{H^2(t)}\leq\frac12, \quad
		\nu_*:=\frac{1}{1-\kappa_*}
		\int_{T_*}^\infty\frac{|E(\sigma)|}{H(\sigma)}\,\mathrm{d}\sigma
		\leq\frac14,
		\quad
		\sup_{t\geq T_*}\frac{|A(t)|}{b(t)}\leq\frac18,
		$$
		and
		$$
		\sup_{t\geq T_*}
		\frac{H(t)}{b(t)}
		\int_t^\infty\frac{|E(\sigma)|}{H(\sigma)}\,\mathrm{d}\sigma
		\leq\frac{1}{16}.
		$$
		The last choice is possible because $H/b\to1$ and the tail integral tends to zero.
		
		Set
		$$
		w_*(t):=\frac{e^{Q(t,T_*)}}{b(t)},
		\quad t\geq T_*.
		$$
		Since $Q_t(t,T_*)=q(t)$,
		$$
		\frac{w_*'(t)}{w_*(t)}
		=q(t)-\frac{b'(t)}{b(t)}=A(t).
		$$
		Therefore
		$$
		w_*''=(A'+A^2)w_*=Ew_*.
		$$
		Moreover, $bw_*=e^{Q(t,T_*)}$, so
		$$
		(bw_*)'=q e^{Q(t,T_*)}=m^2w_*.
		$$
		Consequently,
		$$
		w_*''-(bw_*)'+m^2w_*=Ew_*.
		$$
		Writing $\rho=w_*v$ and expanding every derivative gives
		\begin{align*}
			\rho''-(b\rho)'+m^2\rho&=w_*v''+(2w_*'-bw_*)v'
			+\bigl(w_*''-(bw_*)'+m^2w_*\bigr)v\\
			&=w_*\bigl(v''-(b-2A)v'+Ev\bigr)
			=w_*\bigl(v''-Hv'+Ev\bigr).
		\end{align*}
		Thus it suffices to solve
		\begin{equation}
			\label{eq:vode}
			v''-Hv'+Ev=0
			\quad \text{ on } [T_*,\infty)
		\end{equation}
		with
		$$
		\lim_{t\to\infty}v(t)=1,
		\quad
		\lim_{t\to\infty}v'(t)=0.
		$$
		
		Define, for bounded continuous $h$,
		$$
		(\mathcal Th)(t):=-\int_t^\infty K_{T_*}(t,\sigma)E(\sigma)h(\sigma)\,\mathrm{d}\sigma.
		$$
		Lemma \ref{lem:kernel} gives
		$$
		\|\mathcal Th\|_\infty
		\leq\frac{1}{1-\kappa_*}
		\left(\int_{T_*}^\infty\frac{|E(\sigma)|}{H(\sigma)}\,\mathrm{d}\sigma\right)
		\|h\|_\infty
		=\nu_*\|h\|_\infty.
		$$
		For $t$ in a compact interval $J\subset[T_*,\infty)$, the kernel is continuous in $(t,\sigma)$ for $\sigma\geq t$ and satisfies
		$$
		|K_{T_*}(t,\sigma)E(\sigma)h(\sigma)|
		\leq
		\frac{\|h\|_\infty}{1-\kappa_*}
		\frac{|E(\sigma)|}{H(\sigma)},
		$$
		where the right-hand side is integrable. After extending the integrand by zero to $\sigma<t$, dominated convergence proves continuity of $\mathcal Th$. The same estimate also gives
		$$
		|(\mathcal Th)(t)|
		\leq
		\frac{\|h\|_\infty}{1-\kappa_*}
		\int_t^\infty\frac{|E(\sigma)|}{H(\sigma)}\,\mathrm d\sigma
		\longrightarrow0,
		$$
		so $\mathcal T$ maps $C_b([T_*,\infty))$ into itself. Let $v_0\equiv1$ and define recursively $v_{j+1}=1+\mathcal Tv_j$. Then
		$$
		\|v_{j+1}-v_j\|_\infty
		\leq\nu_*^j\|v_1-v_0\|_\infty,
		$$
		so $(v_j)$ converges uniformly to a bounded continuous function $v$ satisfying
		$$
		v(t)=1-\int_t^\infty K_{T_*}(t,\sigma)E(\sigma)v(\sigma)\,\mathrm{d}\sigma.
		$$
		If $v$ and $\widetilde v$ are two bounded solutions, then
		$$
		\|v-\widetilde v\|_\infty
		\leq\nu_*\|v-\widetilde v\|_\infty,
		$$
		so uniqueness follows from $\nu_*<1$. Furthermore,
		$$
		\|v\|_\infty\leq1+\nu_*\|v\|_\infty,
		\quad
		\|v-1\|_\infty\leq\nu_*\|v\|_\infty,
		$$
		whence
		$$
		\|v\|_\infty\leq\frac{1}{1-\nu_*}\leq\frac43,
		\quad
		\|v-1\|_\infty\leq\frac{\nu_*}{1-\nu_*}\leq\frac13.
		$$
		Therefore
		\begin{equation}
			\label{eq:v-tail-bounds}
			\frac23\leq v(t)\leq\frac43,
			\quad t\geq T_*.
		\end{equation}
		Also,
		$$
		|v(t)-1|
		\leq\frac{\|v\|_\infty}{1-\kappa_*}
		\int_t^\infty\frac{|E(\sigma)|}{H(\sigma)}\,\mathrm{d}\sigma
		\longrightarrow0.
		$$
		
		Let $J\Subset[T_*,\infty)$. By \eqref{eq:Htransport}, for $t\in J$ and $\sigma\geq t$,
		$$
		\exp\left(-\int_t^\sigma H(r)\,\mathrm d r\right)
		\leq
		\frac{\sup_{t\in J}H(t)}{H(\sigma)}.
		$$
		Thus the derivative of the Volterra integrand is dominated, uniformly for $t\in J$, by a constant multiple of $|E(\sigma)|/H(\sigma)$. The Leibniz rule and dominated convergence are therefore applicable. Since
		$$
		\partial_tK_{T_*}(t,\sigma)
		=-\exp\left(-\int_t^\sigma H(r)\,\mathrm{d}r\right),
		\quad K_{T_*}(t,t)=0,
		$$
		we obtain
		\begin{equation}
			\label{eq:vprime-tail}
			v'(t)=\int_t^\infty
			\exp\left(-\int_t^\sigma H(r)\,\mathrm{d}r\right)
			E(\sigma)v(\sigma)\,\mathrm{d}\sigma.
		\end{equation}
		The right-hand side in \eqref{eq:vprime-tail} is continuously differentiable on every compact subinterval: the boundary contribution is $-E(t)v(t)$, while differentiation of the exponential contributes $H(t)v'(t)$. Hence
		$$
		v''(t)=-E(t)v(t)+H(t)v'(t),
		$$
		so $v\in \mathcal{C}^2([T_*,\infty))$ and satisfies \eqref{eq:vode}. By \eqref{eq:Htransport},
		$$
		|v'(t)|
		\leq\|v\|_\infty H(t)
		\int_t^\infty\frac{|E(\sigma)|}{H(\sigma)}\,\mathrm{d}\sigma.
		$$
		Hence
		$$
		\frac{|v'(t)|}{b(t)v(t)}
		\leq2\frac{H(t)}{b(t)}
		\int_t^\infty\frac{|E(\sigma)|}{H(\sigma)}\,\mathrm{d}\sigma
		\leq\frac18,
		\quad t\geq T_*.
		$$
		Together with $|A|/b\leq1/8$, this yields
		$$
		\left|\frac{\rho'(t)}{b(t)\rho(t)}\right|
		=\left|\frac{A(t)}{b(t)}+\frac{v'(t)}{b(t)v(t)}\right|
		\leq\frac14,
		\quad t\geq T_*.
		$$
		The same estimate shows $v'(t)\to0$ because $H(t)\int_t^\infty |E|/H\to0$: if $b$ is nondecreasing, then
		$$
		b(t)\int_t^\infty\frac{\mathrm{d}\sigma}{b(\sigma)(1+\sigma)^2}
		\leq\int_t^\infty\frac{\mathrm{d}\sigma}{(1+\sigma)^2}
		=\frac{1}{1+t},
		$$
		whereas if $b$ is nonincreasing, then $b(t)$ is bounded and the tail of the integrable function $1/[b(\sigma)(1+\sigma)^2]$ tends to zero. Thus the terminal conditions are attained. Relations \eqref{eq:v-tail-bounds} and the definition of $w_*$ prove
		$$
		\rho(t)>0,
		\quad
		\rho(t)\sim\frac{1}{b(t)}e^{Q(t,T_*)},
		\quad t\geq T_*.
		$$
		
		The coefficients of \eqref{eq:adjoint} are continuous. Hence the standard existence-uniqueness theorem for linear ordinary differential equations extends the solution uniquely from $T_*$ to $[0,T_*]$; see \cite[Chapter 1]{CoddingtonLevinson1955}. Define
		$$
		z(t):=\exp\left(-\frac12\int_0^t b(\tau)\,\mathrm{d}\tau\right)\rho(t).
		$$
		A direct computation gives
		$$
		\rho'=e^{\frac12\int_0^t b}\left(z'+\frac{b}{2}z\right), \quad
		\rho''=e^{\frac12\int_0^t b}
		\left(z''+bz'+\frac{b'}2z+\frac{b^2}{4}z\right), \quad (b\rho)'=e^{\frac12\int_0^t b}
		\left(bz'+b'z+\frac{b^2}{2}z\right).
		$$
		Therefore \eqref{eq:adjoint} is equivalent to
		\begin{equation}
			\label{eq:liouville-z}
			z''(t)=W(t)z(t),
			\quad
			W(t):=\frac{b^2(t)}4+\frac{b'(t)}2-m^2(t)\geq0.
		\end{equation}
		At $t=T_*$, $z(T_*)>0$ and
		$$
		z'(T_*)
		=e^{-\frac12\int_0^{T_*}b}
		\left(\rho'(T_*)-\frac12b(T_*)\rho(T_*)\right)
		\leq-\frac14b(T_*)e^{-\frac12\int_0^{T_*}b}\rho(T_*)<0.
		$$
		Let $(t_0,T_*]$ be the maximal interval ending at $T_*$ on which $z>0$. On this interval, \eqref{eq:liouville-z} gives $z''\geq0$, so $z'$ is nondecreasing and
		$$
		z'(t)\leq z'(T_*)<0,
		\quad t_0<t\leq T_*.
		$$
		Thus
		$$
		z(t)=z(T_*)-\int_t^{T_*}z'(\tau)\,\mathrm{d}\tau
		\geq z(T_*)+(T_*-t)|z'(T_*)|>0.
		$$
		If $t_0>0$, continuity would imply $z(t_0)=0$, a contradiction. Hence $t_0=0$; letting $t\downarrow0$ in the last estimate also gives $z(0)>0$. Thus $z>0$ and $z'<0$ on $[0,T_*]$. Consequently,
		$$
		b(t)\rho(t)-\rho'(t)
		=e^{\frac12\int_0^t b}
		\left(\frac12b(t)z(t)-z'(t)\right)>0,
		\quad 0\leq t\leq T_*.
		$$
		For $t\geq T_*$, the same inequality follows from $|\rho'|\leq b\rho/4$. This proves \eqref{eq:rho-sign}.
		
		On the compact interval $[0,T_*]$, the function $t\longmapsto |\rho'(t)|/(b(t)\rho(t))$ is continuous because $b$ and $\rho$ are positive. Hence it is bounded. Together with \eqref{eq:rho-small-der}, this proves \eqref{eq:rho-der}.
		
		Finally, for $t\geq T_*$, the bounds for $v$ give
		$$
		b(t)\rho(t)
		=
		e^{Q(t,T_*)}v(t)
		\leq C e^{Q(t,T_*)}.
		$$
		Since $T_*\geq T_0$, \eqref{eq:balance-exp} yields
		$$
		e^{Q(t,T_*)}
		\leq
		X(t,T_*)^\beta
		\exp\bigl(
		\Omega_{\beta,T_0}(\log X(t,T_*))
		\bigr).
		$$
		Moreover,
		$$
		X(t,T_*)
		=
		\frac{1+B(t)}{1+B(T_*)}
		\leq1+B(t).
		$$
		Since $\beta\geq0$ and $\Omega_{\beta,T_0}$ is nondecreasing, we obtain
		$$
		b(t)\rho(t)
		\leq
		C(1+B(t))^\beta
		\exp\bigl(
		\Omega_{\beta,T_0}(\log(1+B(t)))
		\bigr),
		\quad t\geq T_*.
		$$
		
		For $0\leq t\leq T_*$, since $\beta\geq0$,
		$\Omega_{\beta,T_0}\geq0$, and $B(t)\geq0$, we have
		$$
		(1+B(t))^\beta
		\exp\bigl(
		\Omega_{\beta,T_0}(\log(1+B(t)))
		\bigr)
		\geq1.
		$$
		On the other hand, $b\rho$ is continuous on the compact interval
		$[0,T_*]$, and therefore
		$$
		M_*:=
		\max_{0\leq t\leq T_*}b(t)\rho(t)<\infty.
		$$
		Hence
		$$
		b(t)\rho(t)
		\leq
		M_*
		(1+B(t))^\beta
		\exp\bigl(
		\Omega_{\beta,T_0}(\log(1+B(t)))
		\bigr),
		\quad 0\leq t\leq T_*.
		$$
		Increasing the constant completes the proof of
		\eqref{eq:rho-growth}.
	\end{proof}
	
	\begin{remark}[Necessity of a compact-time nonoscillation restriction]
		\label{rem:obstruction}
		The asymptotic assumptions alone cannot yield the conclusion of Theorem \ref{thm:rho}. To see this, take $b(t)\equiv1$, fix $M>1/2$, and choose
		$$
		L>\frac{\pi}{\sqrt{M^2-1/4}}.
		$$
		Let $\chi\in \mathcal{C}^\infty([0,\infty))$ satisfy $0\leq\chi\leq1$, $\chi=1$ on $[0,L]$, and $\chi=0$ on $[L+1,\infty)$. For a fixed $\beta>0$, define
		$$
		m(t):=\chi(t)M+(1-\chi(t))\sqrt{\frac{\beta}{1+t}}.
		$$
		Then $m$ is positive and smooth, $m(t)=M$ on $[0,L]$, and $m^2(t)=\beta/(1+t)$ for $t\geq L+1$. The derivative estimates in Hypothesis \ref{ass:m} hold on $[L+1,\infty)$ by direct differentiation and on the compact interval $[0,L+1]$ after increasing the constant. Moreover, $m(t)=o(1)=o(b(t))$. With $T_0=L+1$,
		$$
		B(t)=t,
		\quad
		Q(t,r)=\beta\log\frac{1+t}{1+r}
		=\beta\log X(t,r)
		\quad(t\geq r\geq T_0),
		$$
		so $\Omega_{\beta,T_0}\equiv0$.
		
		On $[0,L]$, however, the Liouville transform $\rho=e^{t/2}z$ reduces the adjoint equation to
		$$
		z''+\left(M^2-\frac14\right)z=0.
		$$
		Every nontrivial solution is of the form
		$$
		z(t)=C\cos\left(\sqrt{M^2-\frac14}\,t-\vartheta\right),
		$$
		whose consecutive zeros are separated by $\pi/\sqrt{M^2-1/4}$. Hence every nontrivial solution has a zero in $[0,L]$. This proves that a compact-time nonoscillation restriction is indispensable for the present direct positive-multiplier method.
	\end{remark}
	
	\section{Weak solutions and diffusion-scale cutoffs} \label{sec:weak-cutoffs}
	
	Throughout the rest of the paper, Hypotheses \ref{ass:b}--\ref{ass:liouville} are in force, and $\rho$ denotes
	the positive slow adjoint solution constructed in Theorem \ref{thm:rho}.
	The measure-theoretic steps use H\"older's inequality and the Tonelli, dominated-convergence, and monotone-convergence theorems in their standard forms; see \cite{Folland1999}.
	
	\begin{definition}[Weak solution]
		\label{def:weak}
		Let $T\in(0,\infty]$, let $f,g\in L^1(\mathbb{R}^n)$, and let
		$$
		u\in L^p_{\mathrm{loc}}([0,T)\times\mathbb{R}^n).
		$$
		We call $u$ a weak solution of \eqref{eq:main} on $[0,T)$ if, for every
		$\phi\in \mathcal{C}_c^2([0,T)\times\mathbb{R}^n)$,
		\begin{align}
			\label{eq:weak}
			&\int_0^T\int_{\mathbb{R}^n}|u|^p\phi\,\mathrm{d}x\mathrm{d}t
			+\varepsilon\int_{\mathbb{R}^n}
			\bigl(g\phi(0)+b(0)f\phi(0)-f\phi_t(0)\bigr)\mathrm{d}x
			\notag\\
			&\hspace{2.5cm}
			=\int_0^T\int_{\mathbb{R}^n}u
			\bigl(\phi_{tt}-\Delta\phi-\partial_t(b\phi)+m^2\phi\bigr)
			\mathrm{d}x\mathrm{d}t.
		\end{align}
	\end{definition}
	
	For completeness, define
	$$
	\vartheta(s):=
	\begin{cases}
		0,&s\leq0,\\
		\exp(-1/s),&s>0,
	\end{cases}
	$$
	and
	$$
	\eta(s):=
	\frac{\vartheta(1-s)}
	{\vartheta(1-s)+\vartheta(s-1/2)},
	\quad s\geq0.
	$$
	The denominator is strictly positive for every $s\geq0$. Since
	$$
	\vartheta'(s)=s^{-2}e^{-1/s}>0
	\quad(s>0),
	$$
	while $\vartheta$ and all its derivatives vanish at $s=0$, the quotient is smooth across $s=1/2$ and $s=1$. Moreover, $s\mapsto\vartheta(1-s)$ is nonincreasing and $s\mapsto\vartheta(s-1/2)$ is nondecreasing. Differentiating the quotient therefore gives $\eta'(s)\leq0$. Hence $\eta\in \mathcal{C}^\infty([0,\infty))$, $0\leq\eta\leq1$, $\eta=1$ on $[0,1/2]$, and $\eta=0$ on $[1,\infty)$. For every $\alpha>0$, the function
	$$
	s\longmapsto \vartheta(s)^\alpha
	=
	\begin{cases}
		0,&s\leq0,\\
		\exp(-\alpha/s),&s>0,
	\end{cases}
	$$
	belongs to $\mathcal{C}^\infty(\mathbb R)$ and is flat at $s=0$. Indeed, every
	derivative on $(0,\infty)$ is a finite linear combination of terms of
	the form
	$$
	s^{-N}\exp(-\alpha/s),
	$$
	which tend to zero as $s\downarrow0$ for every $N\geq0$.
	Consequently, for every $\alpha>0$, the function $\eta^\alpha$ is
	smooth on $[0,\infty)$. In particular $\eta(s)^{2p'}\in \mathcal{C}^\infty([0,\infty))$. Set
	$$
	\eta^*(r):=
	\begin{cases}
		0,&0\leq r<1/2,\\
		\eta(r),&r\geq1/2.
	\end{cases}
	$$
	The function $\eta^*$ is only used as a measurable auxiliary weight; it is not used as a test function.
	For $R>1$, define
	\begin{equation}
		\label{eq:cutoff}
		S_R(t,x):=\frac{1+|x|^2+B(t)}{R},
		\quad
		\psi_R:=\eta(S_R)^{2p'},
		\quad
		\psi_R^*:=(\eta^*(S_R))^{2p'},
	\end{equation}
	where $p'=p/(p-1)$. Since $B'(t)=1/b(t)>0$, the function $B$ is continuous and strictly
	increasing. If $T<\infty$, we use these cutoffs only for
	$$
	1<R<1+B(T).
	$$
	Then there exists a unique $t_R\in(0,T)$ such that $B(t_R)=R-1$. When $T=\infty$, relation \eqref{eq:B-infinity} likewise implies that, for every $R>1$, there exists a unique $t_R>0$ satisfying $B(t_R)=R-1$. Since $\eta(s)=0$ for $s\geq1$, in both cases we have
	$$
	\operatorname{supp}\psi_R \subset [0,t_R]\times\overline{B_{\sqrt{R-1}}(0)}.
	$$
	Hence $\psi_R$ is compactly supported in
	$[0,T)\times\mathbb{R}^n$.
	
	\begin{lemma}[Cutoff estimates]
		\label{lem:cutoff}
		On the support of the derivatives of $\psi_R$,
		$$
		|\partial_t\psi_R|
		\leq\frac{C}{Rb(t)}(\psi_R^*)^{1/p}, \quad |\partial_t^2\psi_R|+|\Delta\psi_R|
		\leq\frac{C}{R}(\psi_R^*)^{1/p}.
		$$
	\end{lemma}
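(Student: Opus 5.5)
The plan is a direct chain-rule computation for $\psi_R=\eta(S_R)^{2p'}$, with $S_R=(1+|x|^2+B(t))/R$. On the support of the derivatives of $\psi_R$ we have $\eta'(S_R)\neq0$ or some higher derivative of $\eta$ is nonzero, which forces $1/2\leq S_R\leq1$. There $\eta^*(S_R)=\eta(S_R)$, so $(\psi_R^*)^{1/p}=\eta(S_R)^{2p'/p}=\eta(S_R)^{2p'-2}$, since $2p'/p=2p'-2$. The task is therefore to show that every derivative term carries a factor $\eta(S_R)^{2p'-2}$ (or smaller), times bounded derivatives of $\eta$, times the appropriate powers of $1/R$ and $1/b$.

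\textbf{Step 1 (time derivatives).} We have $\partial_tS_R=1/(Rb(t))$ and $\partial_t^2S_R=-b'(t)/(Rb^2(t))$. The first derivative is
$$\partial_t\psi_R=2p'\eta^{2p'-1}\eta'(S_R)\frac{1}{Rb(t)}.$$
Because $\eta\leq1$, it follows that $\eta^{2p'-1}\leq\eta^{2p'-2}$, and $|\eta'|\leq C$; this gives the first bound. The second derivative is
$$\partial_t^2\psi_R=2p'(2p'-1)\eta^{2p'-2}(\eta')^2\frac{1}{R^2b^2}+2p'\eta^{2p'-1}\Bigl(\eta''\frac{1}{R^2b^2}-\eta'\frac{b'}{Rb^2}\Bigr).$$
Each term contains $\eta^{2p'-2}$. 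It remains to bound the coefficients $1/(R^2b^2)$ and $|b'|/(Rb^2)$ by $C/R$.

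\textbf{Step 2 (coefficient bounds, the main point).} For $|b'|/b^2$, we use \eqref{eq:b-der} to get $|b'|/b^2\leq C/((1+t)b(t))$, which is bounded since $tb(t)\to\infty$ and $b$ is positive and continuous. For $1/(R^2b^2)$, on the support we have $S_R\geq1/2$ and $S_R\leq1$. The latter gives $1+B(t)\leq R$, and the former gives $R\leq 2(1+|x|^2+B(t))$. The second inequality alone does not control $b$. Instead, write
$$\frac{1}{R^2b^2}=\frac1R\cdot\frac{1}{Rb^2}.$$
If $R\geq1+B(t)$, then $1/(Rb^2)\leq 1/(b^2(1+B(t)))$, which is bounded by \eqref{eq:bB}. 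Since $R\geq1+B(t)$ holds on the support, we obtain $1/(R^2b^2)\leq C/R$. This is the step I expect to be the only real content: the diffusion-scale relation \eqref{eq:bB} is exactly what converts the $b^{-2}$ weight into the $R^{-1}$ scaling.

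\textbf{Step 3 (spatial derivatives).} We have $\nabla S_R=2x/R$ and $\Delta S_R=2n/R$, so
$$\Delta\psi_R=2p'(2p'-1)\eta^{2p'-2}(\eta')^2\frac{4|x|^2}{R^2}+2p'\eta^{2p'-1}\Bigl(\eta''\frac{4|x|^2}{R^2}+\eta'\frac{2n}{R}\Bigr).$$
On the support, $|x|^2\leq R$, so $|x|^2/R^2\leq1/R$. Every term is then bounded by $C\eta^{2p'-2}/R$. Combining Steps 1–3 with the identity $(\psi_R^*)^{1/p}=\eta(S_R)^{2p'-2}$ on $\{S_R\geq1/2\}$ yields both estimates. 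Outside $\{1/2\leq S_R\leq1\}$ all derivatives vanish, because $\eta$ is constant on $[0,1/2]$ and on $[1,\infty)$.
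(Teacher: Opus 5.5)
Your proof is correct and follows the paper's argument essentially step for step: the chain rule for $\eta(S_R)^{2p'}$, the identity $2p'/p=2p'-2$ together with $\eta\le1$ to absorb $\eta^{2p'-1}$, the bound $1/(R^2b^2)\le C/R$ from $1+B(t)\le R$ and \eqref{eq:bB}, boundedness of $|b'|/b^2$, and $|x|^2\le R$ for the spatial terms. The only cosmetic difference is that the paper first bounds $|F'|+|F''|$ for $F=\eta^{2p'}$ and then applies the chain rule, whereas you write out the derivatives explicitly.
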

	
	\begin{proof}
		Write $F(s):=\eta(s)^{2p'}$. On the transition region $1/2\leq s\leq1$, one has $\eta^*(s)=\eta(s)$ and
		$$
		|F'(s)|\leq C\eta(s)^{2p'-1},
		\quad
		|F''(s)|\leq C\bigl(\eta(s)^{2p'-2}+\eta(s)^{2p'-1}\bigr).
		$$
		Since
		$$
		2p'-2=\frac{2p'}{p},
		\quad
		2p'-1\geq\frac{2p'}{p},
		$$
		and $0\leq\eta\leq1$, it follows that
		$$
		|F'(s)|+|F''(s)|\leq C(\eta^*(s))^{2p'/p}.
		$$
		Moreover,
		$$
		\partial_tS_R=\frac{1}{Rb(t)},
		\quad
		\partial_t^2S_R=-\frac{b'(t)}{Rb^2(t)},
		\quad
		\nabla_xS_R=\frac{2x}{R},
		\quad
		\Delta_xS_R=\frac{2n}{R}.
		$$
		Therefore
		$$
		\partial_t\psi_R=F'(S_R)\partial_tS_R,
		$$
		which proves the first estimate. Also,
		$$
		\partial_t^2\psi_R
		=F''(S_R)(\partial_tS_R)^2+F'(S_R)\partial_t^2S_R.
		$$
		On the support of the derivatives, $S_R\leq1$, so $1+B(t)\leq R$. Hence, by \eqref{eq:bB},
		$$
		\frac{1}{R^2b^2(t)}
		=\frac{1+B(t)}{R^2}\frac{1}{b^2(t)(1+B(t))}
		\leq\frac{C}{R}.
		$$
		Furthermore,
		$$
		\frac{|b'(t)|}{b^2(t)}
		\leq\frac{C}{(1+t)b(t)}
		$$
		is bounded: it is continuous on compact intervals and tends to zero by $tb(t)\to\infty$. Thus $|\partial_t^2\psi_R|\leq C R^{-1}(\psi_R^*)^{1/p}$.
		
		Finally,
		$$
		\Delta\psi_R
		=F''(S_R)|\nabla_xS_R|^2+F'(S_R)\Delta_xS_R.
		$$
		On the derivative support, $|x|^2\leq R$, so
		$$
		|\nabla_xS_R|^2=\frac{4|x|^2}{R^2}\leq\frac4R.
		$$
		Together with $\Delta_xS_R=2n/R$, this proves the second estimate.
	\end{proof}
	
	\begin{lemma}[Weighted volume]
		\label{lem:volume}
		For every $R\geq2$,
		$$
		\int_0^\infty
		\bigl[R-1-B(t)\bigr]_+^{n/2}\rho(t)\mathrm{d}t
		\leq CR^{1+n/2+\beta}
		\exp\bigl(\Omega_{\beta,T_0}(\log R)\bigr).
		$$
	\end{lemma}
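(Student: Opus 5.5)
The plan is to change variables from $t$ to the diffusion clock $s=B(t)$, using $\mathrm{d}s=\mathrm{d}t/b(t)$, and then bound $\rho$ through the growth estimate \eqref{eq:rho-growth} of Theorem \ref{thm:rho}. Since the integrand vanishes unless $B(t)<R-1$, the integral runs only over $[0,t_R]$, with $t_R$ defined by $B(t_R)=R-1$ as in Section \ref{sec:weak-cutoffs}. Writing $\rho(t)=b(t)\rho(t)\cdot b(t)^{-1}$, we get
$$
\int_0^{t_R}[R-1-B(t)]_+^{n/2}\rho(t)\,\mathrm{d}t
=\int_0^{t_R}[R-1-B(t)]_+^{n/2}\,b(t)\rho(t)\,\frac{\mathrm{d}t}{b(t)}.
$$
By \eqref{eq:rho-growth}, for $t\leq t_R$ we have $1+B(t)\leq R$. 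Because $\beta\geq0$ and $\Omega_{\beta,T_0}$ is nondecreasing, this gives
$$
b(t)\rho(t)\leq C R^{\beta}\exp\bigl(\Omega_{\beta,T_0}(\log R)\bigr)
$$
uniformly on $[0,t_R]$.

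This uniform bound can be pulled out of the integral. What remains is
$$
\int_0^{t_R}[R-1-B(t)]^{n/2}\frac{\mathrm{d}t}{b(t)}=\int_0^{R-1}(R-1-s)^{n/2}\,\mathrm{d}s=\frac{(R-1)^{1+n/2}}{1+n/2}\leq R^{1+n/2},
$$
where the substitution $s=B(t)$ is legitimate because $B$ is $\mathcal C^1$ and strictly increasing with $B'=1/b$. Multiplying the two bounds gives the claimed estimate $CR^{1+n/2+\beta}\exp(\Omega_{\beta,T_0}(\log R))$.

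There is no real obstacle. The only points to check are that the bound \eqref{eq:rho-growth} holds on all of $[0,\infty)$, including the compact piece $[0,T_*]$, which Theorem \ref{thm:rho} already guarantees, and the monotonicity of $\Omega_{\beta,T_0}$, so that its argument $\log(1+B(t))$ may be replaced by the larger value $\log R$. The hypothesis $R\geq2$ is only needed so that $t_R$ is well defined and $\log R>0$; the estimate itself uses nothing beyond Theorem \ref{thm:rho} and Lemma \ref{lem:B}.
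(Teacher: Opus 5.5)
Your proposal is correct and is essentially the paper's argument: bound $b\rho$ by \eqref{eq:rho-growth}, use $1+B(t)\le R$ on the support together with $\beta\ge0$ and the monotonicity of $\Omega_{\beta,T_0}$, then substitute $y=B(t)$, $\mathrm{d}t=b(t)\,\mathrm{d}y$. The only differences are that the paper splits off $[0,T_0]$ and bounds it by continuity of $\rho$, which is redundant because \eqref{eq:rho-growth} already holds for all $t\ge0$, and that your inequality $1+B(t)\le R$ comes from the monotonicity of $B$ and the definition of $t_R$, not from \eqref{eq:rho-growth} as your wording suggests.
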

	
	\begin{proof}
		Since $\rho$ is continuous on $[0,T_0]$, the contribution of this interval satisfies
		$$
		\int_0^{T_0}
		\bigl[R-1-B(t)\bigr]_+^{n/2}\rho(t)\,\mathrm{d}t
		\leq C R^{n/2}
		\leq C R^{1+n/2+\beta}
		\exp\bigl(\Omega_{\beta,T_0}(\log R)\bigr),
		$$
		because $R\geq2$, $\beta\geq0$, and $\Omega_{\beta,T_0}\geq0$. If $R\leq1+B(T_0)$, the remaining contribution vanishes. Assume
		that $R>1+B(T_0)$. On $[T_0,\infty)$, use \eqref{eq:rho-growth} and the
		change of variable $y=B(t)$, for which $\mathrm{d}t=b(t)\mathrm{d}y$.
		Then
		\begin{align*}
			\int_{T_0}^\infty
			\bigl[R-1-B(t)\bigr]_+^{n/2}\rho(t)\mathrm{d}t &\leq C\int_{B(T_0)}^{R-1}
			(R-1-y)^{n/2}(1+y)^\beta
			\exp\bigl(\Omega_{\beta,T_0}(\log(1+y))\bigr)\mathrm{d}y\\
			&\leq C\exp\bigl(\Omega_{\beta,T_0}(\log R)\bigr)
			R^{n/2+\beta}\int_{B(T_0)}^{R-1}\mathrm{d}y\leq CR^{1+n/2+\beta}
			\exp\bigl(\Omega_{\beta,T_0}(\log R)\bigr).
		\end{align*}
	\end{proof}
	
	\section{Direct fundamental inequality and Osgood argument} \label{sec:fundamental-osgood}
	
	Define the initial sign functional
	$$
	\mathcal{J}_0:=\rho(0)\int_{\mathbb{R}^n}g(x)\mathrm{d}x
	+\bigl(b(0)\rho(0)-\rho'(0)\bigr)
	\int_{\mathbb{R}^n}f(x)\mathrm{d}x.
	$$
	
	\begin{lemma}[Direct fundamental inequality]
		\label{lem:fundamental}
		Let $u$ be a weak solution on $[0,T)$ and suppose that $\mathcal{J}_0>0$. Put
		$$
		I_R:=\int_0^T\int_{\mathbb{R}^n} |u|^p\rho\psi_R\,\mathrm{d}x\mathrm{d}t,
		\quad
		I_R^*:=\int_0^T\int_{\mathbb{R}^n} |u|^p\rho\psi_R^*\,\mathrm{d}x\mathrm{d}t,
		$$
		where the integrals are over $[0,T)\times\mathbb{R}^n$. Then there exists $R_0\geq2$ such that
		\begin{equation}
			\label{eq:fundamental}
			\frac{\varepsilon\mathcal{J}_0}{2}+I_R
			\leq
			CR^{-\theta/p'}
			\exp\left(\frac{\Omega_{\beta,T_0}(\log R)}{p'}\right)
			(I_R^*)^{1/p},
			\quad
			\theta:=\frac{1}{p-1}-\frac{n}{2}-\beta,
		\end{equation}
		for every
		$$
		R_0\leq R<1+B(T),
		$$
		with the upper restriction omitted when $T=\infty$.
	\end{lemma}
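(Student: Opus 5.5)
Test the weak formulation \eqref{eq:weak} with $\phi=\rho\psi_R$. This is admissible: $\rho\in\mathcal C^2([0,\infty))$ by Theorem \ref{thm:rho}, $\psi_R$ is smooth with compact support in $[0,T)\times\mathbb R^n$ for $R<1+B(T)$, and the derivative formula for $\eta^{2p'}$ shows $\rho\psi_R\in\mathcal C^2_c$.

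\textbf{Initial terms.} At $t=0$ we have $S_R(0,x)=(1+|x|^2)/R$, so $\psi_R(0,x)=1$ for $|x|^2\le R/2-1$. Also $\partial_t\psi_R(0,x)=F'(S_R)/(Rb(0))$ is supported where $R/2\le 1+|x|^2\le R$. The initial contribution is therefore
$$
\varepsilon\int\bigl[(g+b(0)f)\rho(0)-f\rho'(0)\bigr]\psi_R(0)\,\mathrm dx-\varepsilon\rho(0)\int f\,\partial_t\psi_R(0)\,\mathrm dx .
$$
Since $f,g\in L^1$, dominated convergence sends the first term to $\varepsilon\mathcal J_0$ as $R\to\infty$. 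The second term is bounded by $C R^{-1}\|f\|_{L^1}\to0$. Hence there is $R_0$ with the initial contribution at least $\varepsilon\mathcal J_0/2$ for $R\ge R_0$.

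\textbf{Right-hand side.} Expand the operator applied to $\rho\psi_R$. The adjoint equation \eqref{eq:adjoint} removes the zeroth-order part $\psi_R(\rho''-(b\rho)'+m^2\rho)$, leaving
$$
\rho\,\partial_t^2\psi_R+(2\rho'-b\rho)\,\partial_t\psi_R-\rho\,\Delta\psi_R .
$$
By \eqref{eq:rho-der}, $|2\rho'-b\rho|\le Cb\rho$. Combining this with Lemma \ref{lem:cutoff} (the $1/b$ in the bound for $\partial_t\psi_R$ cancels the factor $b$) gives pointwise
$$
|\cdots|\le CR^{-1}\rho\,(\psi_R^*)^{1/p}\,\chi_R ,
$$
where $\chi_R$ is the indicator of $\{S_R\le1\}$. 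Hölder's inequality with weight $\rho$ then gives
$$
\Bigl|\iint u(\cdots)\Bigr|\le CR^{-1}(I_R^*)^{1/p}\Bigl(\iint_{S_R\le1}\rho\,\mathrm dx\,\mathrm dt\Bigr)^{1/p'} .
$$
The set $\{S_R\le1\}$ at time $t$ is a ball of radius $(R-1-B(t))_+^{1/2}$. Its $x$-volume is therefore $C[R-1-B(t)]_+^{n/2}$, and Lemma \ref{lem:volume} bounds the double integral by $CR^{1+n/2+\beta}\exp(\Omega_{\beta,T_0}(\log R))$.

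\textbf{Exponent bookkeeping.} The power of $R$ is
$$
-1+\frac{1+n/2+\beta}{p'}=\frac{-1+(p-1)(n/2+\beta)}{p}
=-\frac{p-1}{p}\Bigl(\frac1{p-1}-\frac n2-\beta\Bigr)=-\frac{\theta}{p'},
$$
which is exactly \eqref{eq:fundamental}. The nonnegative term $I_R$ moves to the left-hand side unchanged.

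The main point needing care is the sign and limit of the initial terms, namely that the $f\,\partial_t\phi(0)$ contribution vanishes and that $\mathcal J_0$ is recovered uniformly. This is handled by the localisation of $\partial_t\psi_R(0)$ and the $L^1$ tails, as above. The cancellation through the adjoint equation is otherwise direct.
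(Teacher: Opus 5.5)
Your proof is correct and follows essentially the same route as the paper. You test with $\phi=\rho\psi_R$, let the adjoint equation cancel the zeroth-order part, bound the remainder via \eqref{eq:rho-der} and Lemma~\ref{lem:cutoff}, recover $\varepsilon\mathcal J_0/2$ from the initial terms for large $R$, and close with H\"older's inequality in the measure $\rho\,\mathrm{d}x\,\mathrm{d}t$ and Lemma~\ref{lem:volume}. The differences are minor: you handle the $f\,\partial_t\psi_R(0)$ term with the explicit bound $CR^{-1}\|f\|_{L^1}$ instead of dominated convergence, and you should explicitly take $R_0\geq2$ so that Lemma~\ref{lem:volume} applies.
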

	
	\begin{proof}
		For every admissible $R$, the function $\phi=\rho\psi_R$ belongs to $\mathcal{C}_c^2([0,T)\times\mathbb R^n)$ and is nonnegative. Since $\rho$ solves \eqref{eq:adjoint}, direct expansion gives
		$$
		\phi_{tt}-\Delta\phi-\partial_t(b\phi)+m^2\phi
		=\rho(\partial_t^2\psi_R-\Delta\psi_R)
		+(2\rho'-b\rho)\partial_t\psi_R.
		$$
		By \eqref{eq:rho-der}, $|2\rho'-b\rho|\leq Cb\rho$. Lemma \ref{lem:cutoff} therefore gives
		\begin{equation}
			\label{eq:Lstar-bound}
			\left|
			\phi_{tt}-\Delta\phi-\partial_t(b\phi)+m^2\phi
			\right|
			\leq\frac{C}{R}\rho(\psi_R^*)^{1/p}.
		\end{equation}
		
		The boundary term in \eqref{eq:weak} equals
		$$
		\mathcal B_R:=\varepsilon\int_{\mathbb R^n}
		\bigl[\rho(0)g\psi_R(0)
		+(b(0)\rho(0)-\rho'(0))f\psi_R(0)-\rho(0)f\partial_t\psi_R(0)\bigr] \,\mathrm{d}x.
		$$
		For every fixed $x$, $\psi_R(0,x)\to1$ as $R\to\infty$. Moreover, $0\leq\psi_R(0,x)\leq1$ and
		$$
		|\partial_t\psi_R(0,x)|\leq\frac{C}{Rb(0)}\leq C.
		$$
		The first two integrands are dominated by constants times $|f|+|g|$, while the last one converges pointwise to zero and is dominated by $C|f|$. Hence dominated convergence gives
		$$
		\mathcal B_R\longrightarrow\varepsilon\mathcal J_0.
		$$
		Since $\mathcal J_0>0$, there exists $R_0\geq2$, depending on the fixed data and coefficients but not on $\varepsilon$, such that
		$$
		\mathcal B_R\geq\frac{\varepsilon\mathcal J_0}{2}
		\quad(R\geq R_0).
		$$
		
		Using \eqref{eq:weak}, taking the absolute value of the right-hand side, and applying \eqref{eq:Lstar-bound}, we obtain
		$$
		\frac{\varepsilon\mathcal J_0}{2}+I_R
		\leq\frac{C}{R}
		\int_0^T\int_{\mathbb{R}^n} |u|\rho(\psi_R^*)^{1/p}\,\mathrm{d}x\mathrm{d}t.
		$$
		All integrals are finite because the cutoffs have compact support and $u\in L^p_{\mathrm{loc}}$. H\"older's inequality with respect to the measure $\rho(t)\,\mathrm{d}x\mathrm{d}t$ gives
		$$
		\frac{\varepsilon\mathcal J_0}{2}+I_R \leq\frac{C}{R}(I_R^*)^{1/p}\left(\int_0^T
		\int_{\left\{x\in\mathbb{R}^n:
			1+|x|^2+B(t)\leq R\right\}}\rho(t)\,\mathrm{d}x\mathrm{d}t\right)^{1/p'}.
		$$
		For fixed $t$, the spatial section is the ball of radius $[R-1-B(t)]_+^{1/2}$ and therefore has volume
		$$
		|B_1(0)|\bigl[R-1-B(t)\bigr]_+^{n/2}.
		$$
		Lemma \ref{lem:volume} yields
		$$
		\frac{\varepsilon\mathcal J_0}{2}+I_R
		\leq CR^{-1+(1+n/2+\beta)/p'}
		\exp\left(\frac{\Omega_{\beta,T_0}(\log R)}{p'}\right)
		(I_R^*)^{1/p}.
		$$
		Finally,
		$$
		-1+\frac{1+n/2+\beta}{p'}
		=\frac{-p'+1+n/2+\beta}{p'}=-\frac{1}{p'}
		\left(\frac{1}{p-1}-\frac n2-\beta\right)
		=-\frac{\theta}{p'},
		$$
		which proves \eqref{eq:fundamental}.
	\end{proof}
	
	\begin{lemma}[Osgood test-function lemma]
		\label{lem:osgood}
		Let $T\in(0,\infty]$ and let
		$$
		w\in L^1_{\mathrm{loc}}
		\bigl([0,T)\times\mathbb{R}^n\bigr), \quad w\geq0.
		$$
		Suppose that, for $R_0\leq R<\mathcal{R}$,
		\begin{align}
			\label{eq:osgood-ass}
			&\delta+\int_0^T\int_{\mathbb{R}^n}w(t,x)\psi_R(t,x)\,\mathrm{d}x\mathrm{d}t \notag\\
			\quad&\leq C_0R^{-\theta/p'}
			\exp\left(\frac{\Omega_{\beta,T_0}(\log R)}{p'}\right)\left(\int_0^T\int_{\mathbb{R}^n} w(t,x)\psi_R^*(t,x)\,\mathrm{d}x\mathrm{d}t
			\right)^{1/p},
		\end{align}
		where $\delta>0$ and $\theta\geq0$. Then
		\begin{equation}
			\label{eq:osgood-concl}
			\int_{R_0}^{\mathcal{R}}
			r^{-1+(p-1)\theta}
			\exp\bigl(
			-(p-1)\Omega_{\beta,T_0}(\log r)
			\bigr)\,\mathrm{d}r
			\leq
			C\delta^{-(p-1)}.
		\end{equation}
	\end{lemma}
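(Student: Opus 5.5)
The plan is the classical Ikeda--Sobajima device, adapted to the diffusion-scale cutoffs. I will work with the quantity
$$
Y(R):=\int_0^R\left(\int_0^T\int_{\mathbb{R}^n}w(t,x)\psi_r^*(t,x)\,\mathrm{d}x\mathrm{d}t\right)\frac{\mathrm{d}r}{r}.
$$
Since $\psi_r^*=(\eta^*(S_r))^{2p'}$ with $S_r=(1+|x|^2+B(t))/r$, Tonelli gives
$$
Y(R)=\int\!\!\int w\,\Phi\!\left(\frac{1+|x|^2+B(t)}{R}\right)\mathrm{d}x\mathrm{d}t,
\qquad
\Phi(s):=\int_s^\infty \eta^*(\sigma)^{2p'}\frac{\mathrm{d}\sigma}{\sigma}.
$$
The substitution is $\sigma=S_R\cdot R/r$, so that $r$ ranges over $(0,R)$ exactly when $\sigma$ ranges over $(S_R,\infty)$. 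Because $\eta^*$ is supported in $[1/2,1]$ and is bounded by $\eta$, we get $\Phi(s)\le\log 2\cdot\eta(s)^{2p'}$. This holds for $s\ge1/2$ by monotonicity of $\eta$, and for $s<1/2$ because then $\eta(s)=1$. Hence
$$
Y(R)\le \log 2\int\!\!\int w\,\psi_R\,\mathrm{d}x\mathrm{d}t.
$$
Also, $Y$ is absolutely continuous in $R$ with
$$
R\,Y'(R)=\int\!\!\int w\,\psi_R^*\,\mathrm{d}x\mathrm{d}t .
$$
Local integrability of $w$ and the compact supports make all these quantities finite for $R<\mathcal R$. The one subtlety is that $Y(R)$ integrates $r$ from $0$. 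To handle this I will start the integral at $R_0/2$ instead, or equivalently note that $\psi_r^*$ vanishes for $r\le1$. Either version still satisfies $Y\lesssim\int\!\!\int w\psi_R$, and that is all the argument needs.

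Next I insert this into \eqref{eq:osgood-ass}. Writing
$$
G(R):=C_0R^{-\theta/p'}\exp\bigl(\Omega_{\beta,T_0}(\log R)/p'\bigr),
$$
the hypothesis gives
$$
\delta+c\,Y(R)\le G(R)\bigl(R\,Y'(R)\bigr)^{1/p}.
$$
Raising to the power $p$ yields
$$
\bigl(\delta+cY(R)\bigr)^p\le G(R)^p\,R\,Y'(R).
$$
Now divide and use $p/p'=p-1$, which gives the differential inequality
$$
\frac{Y'(R)}{(\delta+cY(R))^p}\ \ge\ C_0^{-p}\,R^{-1+(p-1)\theta}\exp\bigl(-(p-1)\Omega_{\beta,T_0}(\log R)\bigr).
$$
This identity $p/p'=p-1$ is exactly why the exponent in \eqref{eq:osgood-concl} is $(p-1)\theta$ and why the weight is $\exp(-(p-1)\Omega)$.

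Finally I integrate the differential inequality over $[R_0,R]$ and let $R\uparrow\mathcal R$. The left side is bounded by
$$
\int_{Y(R_0)}^{\infty}\frac{\mathrm{d}y}{(\delta+cy)^p}\le\frac{1}{c(p-1)}\,\delta^{-(p-1)}.
$$
Monotone convergence then gives \eqref{eq:osgood-concl}. This step uses $Y\ge0$ and $Y$ nondecreasing, both of which hold because $w\ge0$.

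The main obstacle is the comparison between $\Phi$ and $\eta^{2p'}$ together with the absolute continuity of $Y$. The clean route is to write $Y'$ directly from the $r$-integral definition, which gives $R\,Y'(R)=\int\!\!\int w\psi_R^*$ almost everywhere. The pointwise bound $\Phi\le\log2\,\eta^{2p'}$ itself comes from the support property $\operatorname{supp}\eta^*\subset[1/2,1]$.
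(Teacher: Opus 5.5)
Your proposal is correct and follows the paper's proof essentially step for step. The paper likewise sets $Y(R)=\int_{R_0}^R y(r)\,\mathrm{d}r/r$ with $y(r)=\int\!\!\int w\psi_r^*$. It proves the pointwise bound $\int_{R_0}^R\psi_r^*\,\mathrm{d}r/r\le(\log2)\psi_R$ by a case split on $S=1+|x|^2+B(t)$ rather than through your profile $\Phi$, and then integrates the same inequality $Y'/(\delta+cY)^p\ge c_1R^{-1+(p-1)\theta}e^{-(p-1)\Omega_{\beta,T_0}(\log R)}$. The only cosmetic difference is that starting the $r$-integral at $R_0$ gives $Y(R_0)=0$ directly, whereas your choice of lower limit needs only $Y(R_0)\ge0$, which you correctly use.
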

	
	\begin{proof}
		Set
		$$
		y(r):=\int_0^T\int_{\mathbb{R}^n}w\psi_r^*,
		\quad
		Y(R):=\int_{R_0}^R\frac{y(r)}{r}\,\mathrm{d}r,
		$$
		where the integrals may initially take the value $+\infty$. Fix a spacetime point and write $S:=1+|x|^2+B(t)$. Since $\eta^*(S/r)$ can be nonzero only when $1/2\leq S/r\leq1$, one must have $S\leq r\leq2S$. If $S\leq R/2$, then $S/R\leq1/2$, so $\psi_R=1$, and
		$$
		\int_{R_0}^R\psi_r^*\frac{\mathrm{d}r}{r}
		\leq\int_S^{2S}\frac{\mathrm{d}r}{r}
		=\log2=(\log2)\psi_R.
		$$
		If $R/2<S\leq R$, then $S\leq r\leq R$ on the integration domain and $S/r\geq S/R$. Since $\eta$ is nonincreasing,
		$$
		\psi_r^*=\eta(S/r)^{2p'}\leq\eta(S/R)^{2p'}=\psi_R,
		$$
		so
		$$
		\int_{R_0}^R\psi_r^*\frac{\mathrm{d}r}{r}
		\leq\psi_R\int_S^R\frac{\mathrm{d}r}{r}
		\leq(\log2)\psi_R.
		$$
		If $S>R$, both sides vanish. Thus
		$$
		\int_{R_0}^R\psi_r^*(t,x)\frac{\mathrm{d}r}{r}
		\leq(\log2)\psi_R(t,x)
		$$
		for every point. Since $w\geq0$, Tonelli's theorem gives
		$$
		Y(R)=\int_0^T\int_{\mathbb{R}^n} w(t,x) \left(\int_{R_0}^R\psi_r^*(t,x)\frac{\mathrm{d}r}{r}\right)
		\mathrm d x\mathrm d t \leq(\log2) \int_0^T\int_{\mathbb{R}^n} w\psi_R<\infty.
		$$
		For every compact interval $[R_0,R_1]\subset[R_0,\mathcal R)$, the same argument shows that $y(r)/r\in L^1(R_0,R_1)$. Hence $Y$ is locally absolutely continuous and
		$$
		RY'(R)=y(R)
		$$
		for almost every $R$. Therefore, with $c=(\log2)^{-1}$, assumption \eqref{eq:osgood-ass} implies
		$$
		\delta+cY(R)
		\leq C_0R^{-\theta/p'}
		\exp\left(\frac{\Omega_{\beta,T_0}(\log R)}{p'}\right)
		y(R)^{1/p}.
		$$
		Raising this inequality to the power $p$ and using $y(R)=RY'(R)$ yields
		$$
		(\delta+cY(R))^p
		\leq C R^{1-(p-1)\theta}
		\exp\bigl((p-1)\Omega_{\beta,T_0}(\log R)\bigr)Y'(R).
		$$
		Consequently, for almost every $R$,
		$$
		\frac{Y'(R)}{(\delta+cY(R))^p}
		\geq c_1R^{-1+(p-1)\theta}
		\exp\bigl(-(p-1)\Omega_{\beta,T_0}(\log R)\bigr).
		$$
		Integrating from $R_0$ to any $R<\mathcal R$ and using $Y(R_0)=0$, we get
		\begin{align*}
			\int_{R_0}^R
			r^{-1+(p-1)\theta}
			\exp\bigl(-(p-1)\Omega_{\beta,T_0}(\log r)\bigr)\,\mathrm{d}r &\leq C\int_{R_0}^R\frac{Y'(r)}{(\delta+cY(r))^p}\,\mathrm{d}r\\
			&=\frac{C}{c(p-1)}
			\left[\delta^{1-p}-(\delta+cY(R))^{1-p}\right]
			\leq C\delta^{-(p-1)}.
		\end{align*}
		Letting $R\uparrow\mathcal R$ proves \eqref{eq:osgood-concl}; if $\mathcal R=\infty$, use monotone convergence.
	\end{proof}
	
	\section{Main nonexistence and lifespan results} \label{sec:main-results}
	
	\begin{theorem}[Direct $L^1$ nonexistence criterion]
		\label{thm:abstract-main}
		Assume Hypotheses \ref{ass:b}--\ref{ass:liouville}. Let $\rho$ be the slow adjoint solution from Theorem \ref{thm:rho}, and let $f,g\in L^1(\mathbb{R}^n)$ satisfy $\mathcal{J}_0>0$. Let $u$ be a weak solution of \eqref{eq:main} on $[0,T)$. Set
		$$
		\theta:=\frac{1}{p-1}-\frac{n}{2}-\beta.
		$$
		If $\theta\geq0$ and
		\begin{equation}
			\label{eq:divergence}
			\int_2^\infty
			R^{-1+(p-1)\theta}
			\exp\bigl(-(p-1)\Omega_{\beta,T_0}(\log R)\bigr)\mathrm{d}R
			=\infty,
		\end{equation}
		then no global weak solution exists. More precisely, if $T<\infty$, then either $1+B(T)\leq R_0$, or
		\begin{equation}
			\label{eq:life-implicit}
			\int_{R_0}^{1+B(T)}
			R^{-1+(p-1)\theta}
			\exp\bigl(-(p-1)\Omega_{\beta,T_0}(\log R)\bigr)\mathrm{d}R
			\leq C(\varepsilon\mathcal{J}_0)^{-(p-1)}.
		\end{equation}
	\end{theorem}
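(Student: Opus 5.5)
The plan is to combine Lemma \ref{lem:fundamental} with Lemma \ref{lem:osgood}, using the nonnegative weight $w:=|u|^p\rho$. Since $u\in L^p_{\mathrm{loc}}([0,T)\times\mathbb R^n)$ and $\rho$ is continuous and positive by Theorem \ref{thm:rho}, we have $w\in L^1_{\mathrm{loc}}$ and $w\geq0$. With this choice,
$$
I_R=\int_0^T\int_{\mathbb R^n}w\psi_R, \qquad I_R^*=\int_0^T\int_{\mathbb R^n}w\psi_R^*.
$$
Lemma \ref{lem:fundamental} applies because $\mathcal J_0>0$. It gives exactly hypothesis \eqref{eq:osgood-ass} with $\delta:=\varepsilon\mathcal J_0/2>0$, with the given $\theta\geq0$, and with $\mathcal R:=1+B(T)$. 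When $T=\infty$ we take $\mathcal R=\infty$, which is consistent with \eqref{eq:B-infinity}.

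\emph{Finite lifespan.} Suppose $T<\infty$ and $1+B(T)>R_0$. Then the range $R_0\leq R<\mathcal R$ is nonempty, and Lemma \ref{lem:osgood} yields \eqref{eq:osgood-concl}. This reads
$$
\int_{R_0}^{1+B(T)}R^{-1+(p-1)\theta}\exp\bigl(-(p-1)\Omega_{\beta,T_0}(\log R)\bigr)\,\mathrm dR\leq C\delta^{-(p-1)}=C'(\varepsilon\mathcal J_0)^{-(p-1)},
$$
which is \eqref{eq:life-implicit}. If instead $1+B(T)\leq R_0$, we are in the first alternative of the statement and there is nothing to prove.

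\emph{Global case.} Suppose $T=\infty$ and a global weak solution exists. The same application of Lemma \ref{lem:osgood} with $\mathcal R=\infty$ bounds $\int_{R_0}^\infty(\cdots)\,\mathrm dR$ by a finite constant. The integrand is nonnegative and continuous in $R$, because $\Omega_{\beta,T_0}$ is finite and nondecreasing by Hypothesis \ref{ass:balance}; a monotone function is locally bounded and measurable. Hence $\int_2^{R_0}(\cdots)\,\mathrm dR$ is also finite, and so $\int_2^\infty(\cdots)\,\mathrm dR<\infty$. This contradicts \eqref{eq:divergence}, so no global weak solution exists.

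\emph{Main obstacle.} Almost all the work has already been done in the earlier lemmas, so the delicate point is bookkeeping. One has to check that $R_0$ is the same in both lemmas; this holds because Lemma \ref{lem:osgood} takes $R_0$ as given and only needs \eqref{eq:osgood-ass} on $[R_0,\mathcal R)$. One also has to check that the admissible range $R<1+B(T)$ in Lemma \ref{lem:fundamental} matches $\mathcal R$. Finally, the constant in \eqref{eq:life-implicit} must not depend on $\varepsilon$. This holds because $C_0$ in Lemma \ref{lem:fundamental} depends only on the coefficients, $n$, $p$ and $\rho$, and $R_0$ depends only on the fixed data profiles $f,g$.
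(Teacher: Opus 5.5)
Your proposal is correct and follows the paper's proof essentially verbatim: apply Lemma~\ref{lem:osgood} to the fundamental inequality of Lemma~\ref{lem:fundamental} with $w=|u|^p\rho$, $\delta=\varepsilon\mathcal J_0/2$ and $\mathcal R=1+B(T)$ (or $\mathcal R=\infty$), then, for $T=\infty$, add the finite piece over $[2,R_0]$ to contradict \eqref{eq:divergence}. One small slip: monotonicity of $\Omega_{\beta,T_0}$ does not make the integrand continuous, but you do not need that, since $\Omega_{\beta,T_0}\geq0$ already bounds the integrand on $[2,R_0]$ by $R^{-1+(p-1)\theta}$.
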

	
	\begin{proof}
		Apply Lemma \ref{lem:osgood} to \eqref{eq:fundamental} with
		$$
		w=|u|^p\rho,
		\quad
		\delta=\frac{\varepsilon\mathcal J_0}{2}.
		$$
		
		Suppose first that $T=\infty$. For every
		$\mathcal R>R_0$, Lemma \ref{lem:osgood} gives
		$$
		\int_{R_0}^{\mathcal R}
		R^{-1+(p-1)\theta}
		\exp\bigl(
		-(p-1)\Omega_{\beta,T_0}(\log R)
		\bigr)\,\mathrm{d}R
		\leq
		C(\varepsilon\mathcal J_0)^{-(p-1)}.
		$$
		Letting $\mathcal R\to\infty$ and applying the monotone convergence
		theorem, we obtain
		$$
		\int_{R_0}^{\infty}
		R^{-1+(p-1)\theta}
		\exp\bigl(
		-(p-1)\Omega_{\beta,T_0}(\log R)
		\bigr)\,\mathrm{d}R
		<\infty.
		$$
		Since $R_0\geq2$ and $\Omega_{\beta,T_0}$ is finite on bounded
		intervals, the integral over $[2,R_0]$ is finite. Hence the last
		conclusion contradicts \eqref{eq:divergence}. Therefore no global weak
		solution exists.
		
		Let now $T<\infty$. If $1+B(T)\leq R_0$, the first alternative holds.
		If $1+B(T)>R_0$, apply Lemma \ref{lem:osgood} with
		$$
		\mathcal R=1+B(T).
		$$
		It follows that
		$$
		\int_{R_0}^{1+B(T)}
		R^{-1+(p-1)\theta}
		\exp\bigl(
		-(p-1)\Omega_{\beta,T_0}(\log R)
		\bigr)\,\mathrm{d}R
		\leq
		C(\varepsilon\mathcal J_0)^{-(p-1)},
		$$
		which proves \eqref{eq:life-implicit}.
	\end{proof}
	
	\begin{corollary}[Strictly subcritical and critical powers]
		\label{cor:ranges}
		Under the assumptions of Theorem \ref{thm:abstract-main}, the following statements hold.
		\begin{enumerate}
			\item If $\Omega_{\beta,T_0}$ is sublinear, then no global weak solution exists for
			$$
			1<p<p_{\beta,1}(n)=1+\frac{2}{n+2\beta}.
			$$
			\item At $p=p_{\beta,1}(n)$, no global weak solution exists provided
			\begin{equation}
				\label{eq:critical-osgood}
				\int_0^\infty
				\exp\bigl(-(p_{\beta,1}(n)-1)\Omega_{\beta,T_0}(L)\bigr)\mathrm{d}L
				=\infty.
			\end{equation}
			\item Suppose, in addition, that for each sufficiently small $\varepsilon>0$ there exists a maximal solution $u_\varepsilon$ in a fixed local well-posedness class on $[0,T_\varepsilon)$ and that, for every $T<T_\varepsilon$, the restriction of $u_\varepsilon$ to $[0,T)$ is a weak solution in the sense of Definition \ref{def:weak}. If $\Omega_{\beta,T_0}$ is bounded, then
			\begin{equation}
				\label{eq:lifespan}
				1+B(T_\varepsilon)
				\leq
				\begin{cases}
					C\left(1+\varepsilon^{-\left(\frac{1}{p-1}-\frac{n}{2}-\beta\right)^{-1}}\right),
					&1<p<p_{\beta,1}(n),\\[1.2ex]
					C\exp\bigl(C\varepsilon^{-(p-1)}\bigr),
					&p=p_{\beta,1}(n).
				\end{cases}
			\end{equation}
			Here $C>0$ may depend on the coefficients, $p$, and the fixed data, but
			is independent of $\varepsilon$.
		\end{enumerate}
	\end{corollary}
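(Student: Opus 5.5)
All three items will follow from Theorem \ref{thm:abstract-main}: we check the divergence condition \eqref{eq:divergence} in the first two, and extract explicit lifespan bounds from \eqref{eq:life-implicit} in the third. The substitution $R=e^L$, $\mathrm{d}R/R=\mathrm{d}L$, turns the integrals into
$$
\int_{\log 2}^{\infty}\exp\Bigl((p-1)\bigl[\theta L-\Omega_{\beta,T_0}(L)\bigr]\Bigr)\,\mathrm{d}L .
$$
This form is used throughout.

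For (1), $1<p<p_{\beta,1}(n)$ means $p-1<2/(n+2\beta)$, which gives $\theta=\frac1{p-1}-\frac n2-\beta>0$. Sublinearity yields $L_1$ such that $\Omega_{\beta,T_0}(L)\leq\theta L/2$ for $L\geq L_1$. Then the integrand is at least $\exp((p-1)\theta L/2)$, which is not integrable, so \eqref{eq:divergence} holds. For (2), $p=p_{\beta,1}(n)$ gives exactly $\theta=0$, and the transformed integral is $\int_{\log2}^\infty\exp(-(p-1)\Omega_{\beta,T_0}(L))\,\mathrm{d}L$. This diverges by \eqref{eq:critical-osgood}, since the piece over $[0,\log 2]$ is finite: $\Omega_{\beta,T_0}$ is nonnegative and finite there. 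In both cases Theorem \ref{thm:abstract-main} forbids global weak solutions.

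For (3), let $\Omega_{\beta,T_0}\leq M$. We apply Theorem \ref{thm:abstract-main} to the restriction of $u_\varepsilon$ to $[0,T)$ for each $T<T_\varepsilon$. One point needs care: $R_0$ from Lemma \ref{lem:fundamental} depends only on the data and coefficients, not on $\varepsilon$ or $T$, and the constant $C$ in \eqref{eq:life-implicit} is also independent of $\varepsilon$. So either $1+B(T)\leq R_0$, or
$$
e^{-(p-1)M}\int_{R_0}^{1+B(T)}R^{-1+(p-1)\theta}\,\mathrm{d}R\leq C(\varepsilon\mathcal J_0)^{-(p-1)}.
$$
In the subcritical case $\theta>0$ the integral equals $\frac{(1+B(T))^{(p-1)\theta}-R_0^{(p-1)\theta}}{(p-1)\theta}$. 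Hence $(1+B(T))^{(p-1)\theta}\leq C(1+\varepsilon^{-(p-1)})$, which gives $1+B(T)\leq C(1+\varepsilon^{-1/\theta})$. This matches \eqref{eq:lifespan}, since $\theta^{-1}=(\frac1{p-1}-\frac n2-\beta)^{-1}$. In the critical case $\theta=0$ the integral is $\log\frac{1+B(T)}{R_0}$, so $1+B(T)\leq R_0\exp(C\varepsilon^{-(p-1)})$.

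Finally, we let $T\uparrow T_\varepsilon$. $B$ is continuous and increasing, and if $T_\varepsilon=\infty$ the bound contradicts $B(T)\to\infty$ from \eqref{eq:B-infinity}. So $T_\varepsilon<\infty$ and the bound passes to $B(T_\varepsilon)$. The alternative $1+B(T)\leq R_0$ is absorbed into the constant $C$.

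The only step needing attention is confirming the $\varepsilon$-independence of $R_0$ and $C$; the rest is routine. For $R_0$, recall it comes from $\mathcal B_R\to\varepsilon\mathcal J_0$, which scales linearly in $\varepsilon$, so the threshold where $\mathcal B_R\geq\varepsilon\mathcal J_0/2$ does not depend on $\varepsilon$. The constant $C$ comes from Lemma \ref{lem:osgood}, whose constants depend only on $C_0$, $p$ and $\theta$.
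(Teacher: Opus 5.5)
Your proposal is correct and follows essentially the same route as the paper. Item (1) uses the comparison $\Omega_{\beta,T_0}(L)\leq\theta L/2$ for large $L$, item (2) uses the substitution $L=\log R$ with a finite piece over $[0,\log 2]$, and item (3) applies \eqref{eq:life-implicit} on $[0,T)$ with $\Omega_{\beta,T_0}\leq M$ and $\varepsilon$-independent $R_0$ and $C$, then lets $T\uparrow T_\varepsilon$. The only cosmetic difference is that you exclude $T_\varepsilon=\infty$ by combining the uniform bound on $1+B(T)$ with \eqref{eq:B-infinity}, whereas the paper cites the global nonexistence part of Theorem \ref{thm:abstract-main} directly; both work.
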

	
	\begin{proof}
		In the strictly subcritical range, $\theta>0$. By sublinearity, for
		$$
		\sigma:=\frac{\theta}{2}>0
		$$
		there exists $L_\sigma$ such that $\Omega_{\beta,T_0}(L)\leq\sigma L$ for $L\geq L_\sigma$. Hence, for sufficiently large $R$,
		$$
		R^{-1+(p-1)\theta}
		\exp\bigl(-(p-1)\Omega_{\beta,T_0}(\log R)\bigr)
		\geq R^{-1+(p-1)\theta/2}.
		$$
		Since $(p-1)\theta/2>0$, the last power is not integrable at infinity, and \eqref{eq:divergence} follows.
		
		At $p=p_{\beta,1}(n)$, one has $\theta=0$. The substitution $L=\log R$ transforms the integral in \eqref{eq:divergence} into
		$$
		\int_{\log2}^\infty
		\exp\bigl(-(p-1)\Omega_{\beta,T_0}(L)\bigr)\,\mathrm{d}L.
		$$
		Because $\Omega_{\beta,T_0}$ is finite on bounded intervals, the integral over $[0,\log2]$ is finite and strictly positive. Thus divergence over $[\log2,\infty)$ is equivalent to \eqref{eq:critical-osgood}.
		
		Assume now that $\Omega_{\beta,T_0}$ is bounded, say $\Omega_{\beta,T_0}\leq M$. Theorem \ref{thm:abstract-main} first excludes $T_\varepsilon=\infty$. If $1+B(T_\varepsilon)\leq R_0$, both estimates in \eqref{eq:lifespan} are immediate after increasing $C$. Otherwise, for all $T<T_\varepsilon$ sufficiently close to $T_\varepsilon$, one has $1+B(T)>R_0$; apply \eqref{eq:life-implicit} and let $T\uparrow T_\varepsilon$. If $\theta>0$, then
		$$
		\frac{e^{-(p-1)M}}{(p-1)\theta}
		\left[(1+B(T_\varepsilon))^{(p-1)\theta}-R_0^{(p-1)\theta}\right]
		\leq C\varepsilon^{-(p-1)},
		$$
		where the positive fixed factor $\mathcal J_0^{-(p-1)}$ has been absorbed into $C$. Therefore
		$$
		1+B(T_\varepsilon)
		\leq C\left(1+\varepsilon^{-1/\theta}\right).
		$$
		If $\theta=0$, then
		$$
		e^{-(p-1)M}\log\frac{1+B(T_\varepsilon)}{R_0}
		\leq C\varepsilon^{-(p-1)},
		$$
		which gives
		$$
		1+B(T_\varepsilon)
		\leq C\exp\bigl(C\varepsilon^{-(p-1)}\bigr).
		$$
		These are exactly the bounds in \eqref{eq:lifespan}.
	\end{proof}
	
	\section{Coefficient examples satisfying both sets of assumptions} \label{sec:examples}
	
	The purpose of this section is to exhibit explicit pairs of damping and mass coefficients for which the global-in-time small-data existence result in \cite[Theorem 3]{DGR2019} applies in the supercritical range, while the nonexistence result in Corollary \ref{cor:ranges} of the present paper applies in the strictly subcritical and critical ranges.
	
	\begin{example}
		Let
		$$
		b(t)=\mu(1+t)^k,
		\quad
		m^2(t)=\mu(a+k)(1+t)^{k-1},
		$$
		where
		$$
		0 \leq k<1,
		\quad
		a>0,
		\quad
		\mu\geq4a+2k.
		$$
		Equivalently,
		$$
		m(t)=\sqrt{\mu(a+k)}(1+t)^{(k-1)/2}.
		$$
		This is a threshold polynomial family of \cite[Example 1]{DGR2019}.
		Indeed,
		$$
		\frac{b'(t)}{b(t)}=\frac{k}{1+t},
		\quad
		\frac{b''(t)}{b(t)}=\frac{k(k-1)}{(1+t)^2}, \quad
		tb(t)\longrightarrow\infty,
		\quad
		\frac{1}{b(t)(1+t)^2}
		=\frac{1}{\mu}(1+t)^{-k-2}\in L^1(0,\infty),
		$$
		and
		$$
		\frac{tb'(t)}{b(t)}=\frac{kt}{1+t}\leq k<1.
		$$
		Moreover,
		$$
		\frac{m'(t)}{m(t)}=\frac{k-1}{2(1+t)},
		\quad
		\frac{m''(t)}{m(t)}
		=\frac{(k-1)(k-3)}{4(1+t)^2},
		$$
		and
		$$
		\frac{m(t)}{b(t)}
		=\sqrt{\frac{a+k}{\mu}}(1+t)^{-(k+1)/2}
		\longrightarrow0.
		$$
		Hence Hypotheses 1--2 of \cite{DGR2019} are satisfied.
		
		Furthermore,
		$$
		B(t)=\frac{(1+t)^{1-k}-1}{\mu(1-k)},
		\quad
		Q(t,r)=(a+k)\log\frac{1+t}{1+r},
		$$
		and
		$$
		\beta=\lim_{t\to\infty}B(t)m^2(t)
		=\frac{a+k}{1-k}.
		$$
		Define
		$$
		G(t):=\log(1+B(t))-(1-k)\log(1+t).
		$$
		Since
		$$
		\lim_{t\to\infty}G(t)=-\log\bigl(\mu(1-k)\bigr),
		$$
		the function $G$ is bounded on $[0,\infty)$. Using
		$a+k=\beta(1-k)$, we obtain
		$$
		Q(t,r)-\beta\log X(t,r)
		=-\beta\bigl(G(t)-G(r)\bigr).
		$$
		Consequently,
		$$
		\Omega_{\beta,0}(L)
		\leq2\beta\|G\|_{L^\infty(0,\infty)}
		\quad(L\geq0),
		$$
		so Hypothesis \ref{ass:balance} holds with bounded intrinsic excess.
		Finally, writing $x=1+t\geq1$, we have
		$$
		\frac{b^2(t)}4+\frac{b'(t)}2-m^2(t)=\mu x^{k-1}
		\left(\frac{\mu}{4}x^{k+1}-a-\frac{k}{2}\right)\geq\mu x^{k-1}
		\left(\frac{\mu}{4}-a-\frac{k}{2}\right)\geq0.
		$$
		Thus Hypothesis \ref{ass:liouville} is also satisfied.
	\end{example}
	
	\begin{example}
		Let
		$$
		b(t)=\mu(1+t)^{-\kappa},
		\quad
		m^2(t)=\mu c(1+t)^{-\kappa-1},
		$$
		where
		$$
		0 \leq \kappa<1,
		\quad
		c>0,
		\quad
		\mu\geq4c+2\kappa.
		$$
		Then
		$$
		\frac{b'(t)}{b(t)}=-\frac{\kappa}{1+t}, \quad \frac{b''(t)}{b(t)}=\frac{\kappa(\kappa+1)}{(1+t)^2}, \quad
		tb(t)\longrightarrow\infty, \quad
		\frac{1}{b(t)(1+t)^2}
		=\frac{1}{\mu}(1+t)^{\kappa-2}\in L^1(0,\infty),
		$$
		and $tb'(t)\leq0$, so the growth condition holds with $a_0=0$.
		Moreover,
		$$
		m(t)=\sqrt{\mu c}(1+t)^{-(\kappa+1)/2}, \quad
		\frac{m'(t)}{m(t)}=-\frac{\kappa+1}{2(1+t)},
		\quad
		\frac{m''(t)}{m(t)}
		=\frac{(\kappa+1)(\kappa+3)}{4(1+t)^2},
		$$
		and
		$$
		\frac{m(t)}{b(t)}
		=\sqrt{\frac{c}{\mu}}(1+t)^{(\kappa-1)/2}
		\longrightarrow0.
		$$
		Thus Hypotheses 1--2 of \cite{DGR2019} are satisfied.
		
		Furthermore,
		$$
		B(t)=\frac{(1+t)^{\kappa+1}-1}{\mu(\kappa+1)},
		\quad
		Q(t,r)=c\log\frac{1+t}{1+r},
		$$
		and
		$$
		\beta=\lim_{t\to\infty}B(t)m^2(t)
		=\frac{c}{\kappa+1}.
		$$
		Define
		$$
		G_-(t):=\log(1+B(t))-(\kappa+1)\log(1+t).
		$$
		Since
		$$
		\lim_{t\to\infty}G_-(t)
		=-\log\bigl(\mu(\kappa+1)\bigr),
		$$
		the function $G_-$ is bounded. Since $c=\beta(\kappa+1)$,
		$$
		Q(t,r)-\beta\log X(t,r)
		=-\beta\bigl(G_-(t)-G_-(r)\bigr).
		$$
		Hence
		$$
		\Omega_{\beta,0}(L)
		\leq2\beta\|G_-\|_{L^\infty(0,\infty)},
		\quad L\geq0.
		$$
		Finally, with $x=1+t\geq1$,
		$$
		\frac{b^2(t)}4+\frac{b'(t)}2-m^2(t)=\mu x^{-\kappa-1}
		\left(\frac{\mu}{4}x^{1-\kappa}-\frac{\kappa}{2}-c\right)\geq\mu x^{-\kappa-1}
		\left(\frac{\mu}{4}-\frac{\kappa}{2}-c\right)\geq0.
		$$
		Thus all the required assumptions hold.
	\end{example}
	
	\begin{example}
		Let
		$$
		b(t)=\mu\frac{1+t}{a+t},
		\quad
		B(t)=\frac{t+(a-1)\log(1+t)}{\mu},
		\quad
		m^2(t)=\frac{\gamma}{1+B(t)},
		$$
		where
		$$
		a>1,
		\quad
		\mu>0,
		\quad
		0<\gamma\leq\frac{\mu^2}{4a^2}.
		$$
		The displayed formula for $B$ follows from direct integration of
		$b^{-1}$. Moreover,
		$$
		\frac{b'(t)}{b(t)}
		=\frac{a-1}{(1+t)(a+t)},
		\quad
		\frac{b''(t)}{b(t)}
		=-\frac{2(a-1)}{(1+t)(a+t)^2}.
		$$
		Thus the derivative estimates hold, $b$ is increasing, and
		$$
		\sup_{t\geq0}\frac{tb'(t)}{b(t)}
		=\frac{\sqrt a-1}{\sqrt a+1}<1.
		$$
		Also,
		$$
		tb(t)\longrightarrow\infty,
		\quad
		\frac{1}{b(t)(1+t)^2}
		=\frac{a+t}{\mu(1+t)^3}\in L^1(0,\infty).
		$$
		
		Set
		$$
		h(t):=\frac{1}{b(t)(1+B(t))}.
		$$
		Since $b(t)\geq\mu/a$ and $B(t)\geq t/\mu$, we have
		$$
		0<h(t)\leq\frac{C}{1+t}.
		$$
		Furthermore,
		$$
		h'(t)=-h(t)\left(\frac{b'(t)}{b(t)}+h(t)\right),
		$$
		so $|h'(t)|\leq C(1+t)^{-2}$. Since
		$$
		m(t)=\sqrt\gamma\,(1+B(t))^{-1/2},
		\quad
		\frac{m'(t)}{m(t)}=-\frac{h(t)}2,
		$$
		we obtain
		$$
		\frac{m''(t)}{m(t)}
		=-\frac{h'(t)}2+\frac{h^2(t)}4,
		$$
		and therefore the derivative estimates for $m$ hold. Moreover,
		$$
		\frac{m(t)}{b(t)}\longrightarrow0
		$$
		because $B(t)\to\infty$ and $b(t)\geq\mu/a$.
		
		By construction,
		$$
		Q(t,r)
		=\int_r^t\frac{\gamma}{b(\tau)(1+B(\tau))}\,\mathrm d\tau
		=\gamma\log X(t,r),
		$$
		so
		$$
		\Omega_{\gamma,0}(L)=0
		\quad(L\geq0).
		$$
		Also,
		$$
		\lim_{t\to\infty}B(t)m^2(t)
		=\gamma\lim_{t\to\infty}\frac{B(t)}{1+B(t)}=\gamma.
		$$
		Finally, $b'(t)\geq0$, $m^2(t)\leq\gamma$, and
		$b(t)\geq\mu/a$. Hence
		$$
		m^2(t)\leq\gamma
		\leq\frac{\mu^2}{4a^2}
		\leq\frac{b^2(t)}4
		\leq\frac{b^2(t)}4+\frac{b'(t)}2.
		$$
		Thus all coefficient assumptions of both frameworks are satisfied.
	\end{example}
	
	\section{Concluding remarks} \label{sec:conclusion}
	
	The main structural improvement is that the positive adjoint multiplier is no longer postulated. It is constructed from the effective-damping, dominated-mass, accumulated-balance, and Liouville nonoscillation conditions.
	
	The proof separates the large-time and finite-time mechanisms. At large time, the canonical slow mode is obtained from a terminal Volterra equation and has size $b(t)^{-1}e^{Q(t,T_*)}$. On the compact initial interval, the Liouville condition
	$$
	m^2(t)\leq\frac{b^2(t)}4+\frac{b'(t)}2
	$$
	prevents the transformed slow mode from acquiring a zero. Since this inequality is automatic at sufficiently large times under the original asymptotic assumptions, it is essentially a compact-time nonoscillation requirement.
	
	The intrinsic excess $\Omega_{\beta,T_0}$ has a different role: it controls the accumulated growth of the slow multiplier and determines the nonexistence scale. Under bounded excess, the method gives the conditional polynomial and exponential upper lifespan bounds stated in \eqref{eq:lifespan}. The coefficient families in Section 7 satisfy simultaneously Hypotheses 1--2 of \cite{DGR2019} and all coefficient assumptions required by the present nonexistence theorem.
	
	\section*{Acknowledgments}
	Duc An Phan sincerely acknowledges the financial support provided by the Banking Academy of Vietnam. 
	

\end{document}